\crefname{equation}{}{}
\numberwithin{equation}{section}
\newtheorem{theorem}{Theorem}[section]
\newtheorem{lemma}[theorem]{Lemma}
\newtheorem{corollary}[theorem]{Corollary}
\theoremstyle{definition}
\theoremstyle{remark}
\newtheorem*{remark}{Remark}
\crefname{algorithm}{Procedure}{Procedures}
\newcommand\thankssymb[1]{\textsuperscript{\@fnsymbol{#1}}}
\author[Hung-Hsun Hans Yu]{Hung-Hsun Hans Yu}
\address{Department of Mathematics, Princeton University, Princeton, NJ 08544}
\email{hansonyu@princeton.edu}
\title{Deciding Foot-sortability and Minimal 2-bounded Non-foot-sortable Sock Orderings}
\begin{document}

\maketitle

\begin{abstract}
	A sock ordering is a sequence of socks with different colors.
 A sock ordering is foot-sortable if the sequence of socks can be sorted by a stack so that socks with the same color form a contiguous block.
 The problem of deciding whether a given sock ordering is foot-sortable was first considered by Defant and Kravitz, who resolved the case for alignment-free 2-uniform sock orderings.
 In this paper, we resolve the problem in a more general setting, where each color appears in the sock ordering at most twice.
 A key component of the argument is a fast algorithm that determines the foot-sortability of a sock ordering of length $N$ in time $O(N\log N)$, which is also an interesting result on its own.
\end{abstract}

\section{Introduction}
A \emph{stack} is a data structure where, at each step, one can do one of the following three options: put a new element to the top, examine the element at the top, and remove the top element.
The stack-sorting problem is a classical problem that asks to sort a sequence of integers using a stack.
This was, for example, considered in Knuth's book \textit{The Art of Computer Programming} \cite{Knuth}.
This problem has been well-understood, with a linear-time algorithm determining the stack-sortability of a sequence, and also a simple criterion for stack-sortability.
The criterion is that a sequence is stack-sortable if and only if there is not a subsequence $b,c,a$ with $a<b<c$ \cite{Knuth}.
This property is often called \emph{231-avoiding}, though we will not use the terminology in this paper.
The linear-time algorithm and the characterization of stack-sortable sequences both give us a way to count the number of stack-sortable permutations of $[n]$, and strikingly, the number is exactly the $n$-th Catalan number\cite{Knuth}.

Recently, inspired by a real-life application involving socks, Defant and Kravitz \cite{DK22} consider the stack-sorting problem for set partitions.
In their terminology, a \emph{sock ordering} is a sequence of socks with different colors, and a sock ordering is \emph{foot-sortable} if one can sort it with a stack so that socks with the same color are contiguous.
We refer the readers to their paper for a fantastic real-life demonstration of how foot-sorting works.
In this paper, we will represent a sock ordering with a string $s_1s_2\cdots s_n$, where we represent different colors by different letters.
We will thus interchangeably call $s_1\cdots s_n$ a string or a sock ordering, and any $s_i$ a color or a letter.

Defant and Kravitz ask if one could come up with a simple description of foot-sortable sock orderings analogous to the description for stack-sortable sequences.
There are two equivalent ways of stating this formally, both of which will be used in this paper.
The first is a pattern-avoiding description of foot-sortable sock orderings.
We say that a sock ordering $s_1\cdots s_n$ contains another sock ordering $s_1'\cdots s_m'$ as a \emph{pattern} if there is a subsequence $s_{i_1}\cdots s_{i_m}$ of $s_1\cdots s_n$ so that there is a bijection $f:\{s_{i_1},\ldots, s_{i_m}\}\to \{s_1',\ldots, s_m'\}$ with $f(s_{i_j})=s_j'$ for all $j\in [m]$.
Then we wish to find an explicit list (as short as possible) so that a sock ordering is not foot-sortable if and only if it contains some sock ordering in the list as a pattern.
This pattern-avoiding formulation is used more commonly in the literature, but the next formulation is also useful.
A non-foot-sortable sock ordering is \emph{minimal} if deleting any sock makes the sock ordering foot-sortable.
Then it is clear that the list of minimal non-foot-sortable sock orderings can be used as the pattern-avoiding description for the foot-sortable sock orderings, and thus it is equivalent to find all minimal non-foot-sortable sock orderings.

Given that the original motivation involves socks, it is natural to restrict to the sock orderings where each color appears exactly twice.
Defant and Kravitz call such sock orderings \emph{2-uniform}, and they give fourteen 2-uniform patterns that characterize the 2-uniform foot-sortable sock orderings via pattern-avoiding as long as the sock ordering is ``alignment-free'' \cite{DK22}.
Here, a sock ordering is \emph{alignment-free} if it does not contain the pattern $aabb$.
One of the main results of this paper is to lift the assumption that the sock ordering is alignment-free while relaxing the 2-uniform condition by a little bit.
A sock ordering is said to be \emph{2-bounded} if each color appears at most twice.
In this paper, we will give a full classification of minimal non-foot-sortable 2-bounded sock orderings.
The precise list of such sock orderings will be given in \cref{lemma:sporadic} and \cref{lemma:infinite}.

\begin{theorem}\label{theorem:all-minimal}
    There are $14$ sporadic minimal non-foot-sortable 2-bounded sock orderings, and $4$ infinite families of such sock orderings.
\end{theorem}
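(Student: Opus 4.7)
The plan is to prove the classification in two parts, using the $O(N\log N)$ foot-sortability algorithm referenced in the abstract as the main structural tool.

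First I would handle the forward direction: verifying that each of the $14$ sporadic orderings in \cref{lemma:sporadic} and each member of the $4$ infinite families in \cref{lemma:infinite} is in fact minimal non-foot-sortable. For non-foot-sortability, one extracts from the algorithm a concrete ``obstruction witness''---for instance, a snapshot of forced stack states that cannot be resolved into a monochromatic-block output, or a small set of incompatible constraints that the algorithm maintains. For minimality, one checks case by case that removing any single sock eliminates the witness and hence yields a foot-sortable ordering. For the sporadic patterns this is a finite check; for the infinite families one verifies minimality uniformly in the parameters, by induction on the family index or by a symmetry argument exploiting the repeating motif.

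For the reverse direction---the main obstacle---I would take an arbitrary minimal non-foot-sortable 2-bounded sock ordering $s = s_1\cdots s_n$ and study the algorithm's trace on $s$. Minimality forces every sock in $s$ to be essential: each $s_i$ participates in the obstruction, since deleting it restores foot-sortability. By reverse-engineering the algorithm's failure mode into a local combinatorial condition on $s$, I would classify the possible ``tight'' failure structures. The key claim to establish is that every such tight failure falls into one of a bounded number of structural types, each determined either rigidly (producing a sporadic minimal pattern) or by one or two integer parameters controlling the length of a repeating motif (producing one of the four infinite families). A useful intermediate reduction is to first pin down how the two occurrences of each color can interleave in a minimal obstruction; this drastically cuts the search space.

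The heart of the argument is the case analysis on failure types. Here one must be careful to rule out near-miss configurations that look like new minimal patterns but in fact contain one of the known patterns as a strict subpattern. I would organize the case analysis according to which structural feature of the algorithm's trace triggers the obstruction---for example, whether the two copies of some color ``straddle'' another color, and the interleaving pattern of colors immediately surrounding the triggering step---so that each case produces at most one family of minimal patterns. Completeness of the list is the step most susceptible to error, and I would double-check it by running a computer search over all 2-bounded sock orderings of small length, confirming both the $14$ sporadic patterns and the initial members of each of the $4$ families.
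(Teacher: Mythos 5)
Your high-level plan does match the paper's strategy (verify the listed orderings are critical, then prove completeness by analyzing where the foot-sortability algorithm is forced to output false), but as written the completeness direction is only a plan, and the two ideas that actually make it work are missing. First, the algorithm's failure does not occur on the original string: it occurs on a \emph{reduced} string equipped with a simple partial order, and the obstruction one reads off there (the paper's \cref{lemma:intermediate}: patterns such as $apqapq$ with $a>p$, or $aqpaq$ with $a>q>p$) carries \emph{inequality constraints} rather than letters. The crux of the paper's argument is the lifting step: the inequality $a>p$ must have been created by an earlier reduction, which (via \cref{lemma:no-contradiction}) forces an extra unlonely witness letter $m$ whose occurrences sit in constrained positions, and inserting the possible placements of $m$ into the abstract patterns is exactly what generates the $14$ sporadic orderings (\cref{lemma:sporadic-complete}). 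Your proposal never explains how a failure trace on a reduced, reordered instance is converted back into an explicit subsequence of the input, and without that the ``local combinatorial condition'' you invoke is not available.

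Second, your treatment of the infinite families does not close the argument. The four families arise from the type-IV failure mode, where the witness structure can be stretched by an arbitrarily long chain $a_{n-1},a_{n-2},\ldots,a_0$ of reductions; proving that \emph{only} these four families occur requires the delicate extremal bookkeeping of \cref{lemma:infinite-intermediate,lemma:infinite-complete} (choosing the subsequence with the earliest $a_{n-1}$, tie-breaking with the latest $a_0$, and ruling out every alternative branch by exhibiting a sporadic pattern or contradicting the extremal choice). Saying the failures are ``determined by one or two integer parameters'' asserts the conclusion rather than proving it, and the computer search you propose over small lengths cannot certify exhaustiveness, precisely because the phenomenon to be excluded is the existence of \emph{further} infinite families whose members are arbitrarily long. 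Finally, note that minimality of the listed orderings is also needed (the paper supplies explicit total-order certificates for each deletion in \cref{lemma:infinite}); ``removing any sock eliminates the witness'' is not by itself an argument, since foot-sortability of the smaller ordering must be certified, not merely the disappearance of one particular obstruction.
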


The statement might come as striking already, as Defant and Kravitz \cite{DK22} prove that there are only finitely many of them to avoid for alignment-free 2-bounded sock orderings.
Defant and Kravitz ask in \cite[Question 5.1]{DK22} whether there are only finitely many patterns to avoid for general sock orderings or for general 2-bounded sock orderings, and \cref{theorem:all-minimal} answers this negatively.

Just as the linear-time algorithm for deciding the stack sortability gives a characterization of stack-sortable sequences, \cref{theorem:all-minimal} relies on a fast algorithm that determines the foot-sortability of a sock ordering.
In this case, we do not need to assume 2-boundedness.

\begin{theorem}\label{theorem:algorithm}
    There exists an algorithm running in $O(N\log N)$ time which determines whether a given sock ordering of length $N$ is foot-sortable or not.
\end{theorem}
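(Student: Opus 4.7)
The plan is to design a left-to-right sweep algorithm that simulates foot-sorting, maintaining an explicit stack $\mathcal{S}$ (organized as a sequence of maximal same-color blocks) and, for each color $c$, a counter $\beta(c)$ equal to the number of $c$-blocks currently in $\mathcal{S}$. At each step the algorithm either pushes the next input sock onto $\mathcal{S}$ or commits to entering \emph{$c$-mode}, where $c$ is the color of the current top block. Entering $c$-mode consists of popping and outputting the entire top $c$-block and then scanning the input through $\ell_c$, the last occurrence of $c$, pushing non-$c$ socks onto $\mathcal{S}$ and immediately popping any $c$'s as they are pushed; this forces all remaining $c$'s to be output contiguously. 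The greedy rule I propose is: enter $c$-mode whenever it is \emph{safe}, meaning that the invariant $\beta(d) \le 1$ for every color $d$ continues to hold after the mode finishes, and otherwise push the next sock.

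The correctness proof would proceed by an exchange argument. Given any successful foot-sorting of $w$ that first deviates from the greedy, I would rewrite it into another valid foot-sorting that agrees with the greedy at the deviation point. The key lemma is that a safe $c$-mode entry never strictly hurts: the pushes performed by a delayed schedule before entering $c$-mode can be replayed after the immediate $c$-mode is completed, because during a safe $c$-mode those pushes commute past the enclosed $c$-pops. The converse direction handles unsafe entries, showing that any $c$-mode entered with the safety condition violated must either trap some color below a duplicated block or force two separate $d$-blocks into the post-mode stack, both of which are irrecoverable. Together these give that the greedy succeeds if and only if $w$ is foot-sortable, matching the characterization that, after relabeling each color by its rank in some total order, the resulting sequence avoids the generalized $231$-pattern $v_k < v_i < v_j$.

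To achieve the $O(N \log N)$ bound I would execute each safety test in $O(\log N)$. Since each input sock is pushed and popped at most once, the algorithm performs $O(N)$ steps, each of which issues a single safety query. The test examines the range $[p, \ell_c]$ of input that will be scanned during $c$-mode and asks whether the non-$c$ subsequence of this range, together with its interaction with the stack beneath the popped top block, preserves the block-count invariant. I would precompute, for each consecutive pair of same-color occurrences $(d_i, d_{i+1})$ in the input, the set of colors strictly between $d_i$ and $d_{i+1}$, and classify the pair as \emph{always safe}, \emph{safe only in $c$-mode for one specific value of $c$}, or \emph{never safe}. A segment tree indexed by position then answers in $O(\log N)$ whether any never-safe pair lies inside $[p, \ell_c]$, whether all single-color-safe pairs inside agree with the current $c$, and whether the color just beneath the popped top block is compatible with the first non-$c$ sock in the range.

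The main obstacle I anticipate is the exchange argument: formalizing why a delayed safe-mode entry can always be pulled earlier in the push/pop schedule is subtle, especially when the delayed prefix contains its own mode entries. The data-structure layer becomes routine once the safety condition is stated in terms of a constant number of range queries per step, although the interaction with the block beneath the popped top block requires care because that block may merge with the first non-$c$ block pushed during $c$-mode.
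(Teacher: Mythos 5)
Your algorithm is a different route from the paper's (a direct greedy stack simulation with a local ``safety'' test, versus the paper's incremental construction of a certifying total order via $a$-reductions), but as it stands it has a genuine gap at exactly the point you flag: the exchange lemma. The claim you need is that whenever the config is foot-sortable and the block-count invariant survives the proposed $c$-mode (a test that only inspects the input up to $\ell_c$ and the current stack), it is without loss of generality to commit to outputting $c$ next. This is not a routine greedy-exchange step; it is equivalent to the central question the paper spends Lemmas \ref{lemma:case1}--\ref{lemma:case3-exceptional} on, namely which color may be taken as the next minimum of the certifying order. The paper's analysis shows that this decision cannot in general be made from the segment up to the last occurrence of the candidate alone: in the exceptional cases (\cref{lemma:case2-exceptional}, \cref{lemma:case3-exceptional}, and the A-step/B-step dichotomy of \cref{lemma:A-step}) the correct choice depends on the relative positions of later occurrences of $a$ and $z$ beyond the landmark $x$, i.e., on material your safety test never examines. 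Your proposal neither proves that the sequential nature of the sweep always dodges these situations nor identifies the invariant that would make the exchange go through when the delayed schedule interleaves its own mode entries; ``the pushes commute past the enclosed $c$-pops'' is precisely what fails to be obvious there. The converse direction also needs a small but necessary piece you do not state: when the top color's mode is unsafe and the next push itself creates a second block of some color, the algorithm must declare unsortability, and this requires the observation that two separated same-color blocks are irrecoverable together with the fact that pushing is the only remaining legal option.

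Separately, the $O(N\log N)$ layer is asserted rather than established. Your plan classifies consecutive same-color pairs once and for all, but the safety test is stack-dependent: whether a color occurring in $(p,\ell_c]$ already has a block buried in the current stack changes as the stack evolves, and the merging of the block beneath the popped top with the first pushed block changes which pairs are dangerous; a static segment tree over a fixed classification does not obviously answer this in $O(\log N)$ per step. The paper avoids this by maintaining position dictionaries under deletions and amortizing over at most $N$ deletions. In short, the stack-simulation viewpoint is attractive and may well be salvageable, but the correctness of the ``enter when safe'' rule is the mathematical heart of the problem and is exactly what is missing from the proposal.
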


It is likely that using this algorithm, we may be able to classify all the minimal foot-sortable sock orderings using the argument for \cref{theorem:all-minimal}.
As some preliminary attempt shows that there might be many more sporadic examples and infinite families, we do not pursue in that direction and only focus on the 2-bounded cases.

This paper is structured as follows.
In \cref{section:prelim}, we briefly go through some results that will be needed later on.
In \cref{section:algorithm}, we first prove some useful results that cut down the search space dramatically, and then show how to put them together to get an algorithm that runs in $O(N\log N)$ time using some standard data structures and techniques.
Lastly, in \cref{section:min-2-bdd}, we use the algorithm constructed in \cref{section:algorithm} to find all the minimal 2-bounded non-foot-sortable sock orderings.

\section{Preliminaries}\label{section:prelim}
As noted in \cite{DK22}, foot-sorting for sock orderings can be viewed as stack-sorting for set partitions.
We quickly recall a classical result in stack-sorting mentioned in the introduction.

\begin{theorem}[Knuth \cite{Knuth}]
A sequence of numbers can be sorted in increasing order using a stack if and only if there are no three numbers $a<b<c$ that appear in the sequence in the order of $b,c,a$.
\end{theorem}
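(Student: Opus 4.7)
The plan is to prove both directions of the equivalence. For necessity (stack-sortability implies no such triple), I would argue by contradiction: assume there are three entries $a<b<c$ appearing in the input in the order $b,c,a$, and inspect the state of the stack when the reading pointer first reaches $a$. Both $b$ and $c$ must still be on the stack, for if either had already been output, then some subsequent output (in particular $a$, which is strictly smaller) would violate monotonicity of the output. Since $c$ entered after $b$ and neither has been popped, the stack has $c$ strictly above $b$. After this moment, any pop retrieves the current top, so we cannot produce $b$ in the output without first producing $c$; but $b<c$, which contradicts the requirement that the output be increasing.

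For sufficiency, I would analyze the natural greedy algorithm: before reading each new input $x$, repeatedly pop and output while the top of the stack is smaller than $x$, then push $x$; after the input is exhausted, flush the stack. A short induction shows that the stack contents are always strictly decreasing from bottom to top, so the procedure is well-defined. The claim is that whenever the output fails to be increasing, the input contains the forbidden pattern. Suppose some element $y$ is output, and later some $y'<y$ is output. The stack invariant forbids $y'$ from having been in the stack below $y$, and $y'$ cannot have been above $y$ either (since $y$ was on top when it was popped), so $y'$ must occur later than $y$ in the input. Moreover, $y$ was popped only because the next input value $c$ satisfies $c>y$, and this $c$ lies strictly between $y$ and $y'$ in the input order. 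Setting $a=y'$ and $b=y$, we obtain $a<b<c$ appearing in the order $b,c,a$, which is the contrapositive of what is claimed.

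The main obstacle is not conceptual but careful bookkeeping in the sufficiency direction: one must identify the correct three witness positions from a failure of greedy and simultaneously verify the value and position inequalities. Once the invariant on the stack is in place and the three witnesses $b=y$, $c$, $a=y'$ are chosen as above, the remaining verification is immediate. I would also remark that this proof gives the linear-time algorithm mentioned in the introduction as a by-product: the greedy stack procedure both decides sortability and produces a 231-witness on failure in a single left-to-right pass.
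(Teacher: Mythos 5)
Your proof is correct, and it is essentially the classical argument: the paper does not prove this statement at all (it is quoted from Knuth as a known result), so there is no in-paper proof to diverge from. Both of your directions --- the blocking argument showing that $b$ and $c$ must still sit on the stack, with $c$ above $b$, when $a$ is read, and the greedy procedure with the strictly-decreasing-stack invariant extracting a $b,c,a$ witness from any failure --- are the standard ones, and your handling of the witness positions (including the fact that $y'$ must still be unread when $y$ is popped, which rules out the final-flush case) is sound.
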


The theorem gives a criterion for stack-sortability that is easy to test.
Thus, in the foot-sorting setting, if we furthermore require that the colors appear in a particular order, then we may tell if it is possible using a single foot with this criterion.
This has already been noted in the original paper \cite{DK22}, but we will rephrase it in a different language.

Let $S$ be a string representing a sock ordering, $\Gamma$ be the alphabet, and $\leq$ be a partial order on $\Gamma$.
The pair $(S,\leq)$ is \emph{foot-sortable} if it is possible to sort the sock sequence using one foot, so that if $a<b$ then the socks of color $a$ end up before the socks of color $b$.
Then \cite[Theorem 1.1]{DK22} may be rephrased as follows.

\begin{theorem}[Defant--Kravitz \cite{DK22}]\label{theorem:criterion}
    Suppose $S$ is a string that represents a sock ordering. 
    Then the sock ordering is foot-sortable if and only if there exists a total order $\leq_{\ell}$ on the alphabet $\Gamma$ so that no three letters $a<_{\ell}b<_{\ell}c\in \Gamma$ appear in $S$ in the order of $b,c,a$.
\end{theorem}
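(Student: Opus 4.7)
The plan is to derive this criterion as a direct rephrasing of Knuth's theorem on stack-sortability. First, I would observe that any foot-sorting of $S$ produces an output in which same-colored socks appear in consecutive blocks, so the order of these blocks defines a total order $\leq_\ell$ on $\Gamma$; conversely, given a total order $\leq_\ell$, the condition ``foot-sortable with this particular block order'' is precisely the condition that $S$ can be stack-sorted to produce an output whose colors are $\leq_\ell$-non-decreasing. The theorem therefore reduces to the following claim for each fixed total order $\leq_\ell$: the sequence $S$ can be stack-sorted to a $\leq_\ell$-non-decreasing output if and only if $S$ contains no subsequence $b,c,a$ with three \emph{distinct} letters $a <_\ell b <_\ell c$.

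For the necessity direction I would argue by contradiction. If a forbidden subsequence appears at positions $i<j<k$, then in any valid stack-sort the sock at position $k$ (color $a$) must be output before the socks at positions $i$ and $j$ (colors $b$ and $c$). Standard stack reasoning then forces the socks at $i$ and $j$ to still be on the stack when the sock at $k$ is pushed, with $j$ above $i$; after popping the $a$-sock, producing $b$ before $c$ becomes impossible because $c$ sits above $b$ in the stack.

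For the sufficiency direction I would analyze the natural greedy algorithm: maintain the ``next color'' (the $\leq_\ell$-smallest color that has not yet been fully output); at each step, pop if the top of the stack equals the next color, otherwise push the next input symbol. If this algorithm terminates with empty stack and empty input, the produced output is a valid foot-sort. If it gets stuck, the input must be empty with a non-sorted stack, so some pair $x<_\ell y$ appears with $y$ above $x$ on the stack. Tracing the algorithm back to the moment $y$ was pushed shows that at that moment the next color was some letter $a$ with $a<_\ell x$ that was still present in the remaining input (otherwise greedy would have popped the top instead of pushing $y$); together with $x$ and $y$, this letter realizes a forbidden $bca$ subsequence in $S$.

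The main obstacle is this last sufficiency step, specifically the combinatorial bookkeeping inside the stuck-state analysis: one must pick the ``right'' pair $(x,y)$ on the stack (for example, $x$ as the $\leq_\ell$-minimum element on the stack and $y$ as the topmost stack element satisfying $x<_\ell y$), and then carefully track back through the history of greedy decisions to certify that a strictly smaller letter appears in the input strictly after $y$'s push position. Modulo this case analysis, the argument is standard for Knuth-type characterizations, and I would present it in the style of the inductive proof of the $231$-avoidance criterion.
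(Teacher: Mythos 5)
Your route is the same one the paper intends: the paper gives no proof of this statement at all (it is quoted from Defant--Kravitz), and the discussion immediately preceding it frames the result exactly as you do --- for each fixed total order $\leq_{\ell}$ on colors, foot-sorting with the blocks in that order is the same as stack-sorting the color sequence into $\leq_{\ell}$-non-decreasing order, so the Knuth criterion quoted in Section 2 applies, and one then quantifies over total orders. Your necessity argument is correct as stated. The only real issue is the step you yourself flag in the sufficiency direction: the parenthetical justification (``otherwise greedy would have popped the top instead of pushing $y$'') only shows that at the moment $y$ was pushed the current next color $a$ differs from the top of the stack; it does not by itself place an $a$-sock in the remaining input, since a priori the not-yet-output socks of color $a$ could all be sitting lower in the stack. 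The patch is short: take $x$ to be the topmost stack occurrence of the stuck next color and $y$ the element directly above it in the final stack; by stack discipline everything pushed between them was popped before $y$ was pushed, so at $y$'s push the top was exactly this $x$, forcing the next color $a$ at that moment to satisfy $a<_{\ell}x$. Since $x$ is never popped, no sock lying below it can ever be output; but every color $<_{\ell}x$ is fully output in the stuck state, so some $a$-sock must occur in the input after $y$'s position, and $x,y,a$ is the forbidden subsequence. With that correction your argument is complete and faithfully reconstructs the cited Defant--Kravitz proof, differing from the paper only in that you re-prove the Knuth-type stack-sorting criterion rather than citing it.
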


Although the foot-sortability of a pair $(S,\leq_{\ell})$ can be tested easily, it is unclear how to find such $\leq_{\ell}$ efficiently if it exists.
The algorithm will focus on cutting down the search space for such total orders.

\section{$O(N\log N)$ algorithm for deciding foot-sortability}\label{section:algorithm}
In this section, we provide an algorithm that decides the foot-sortability of any given sock ordering in $O(N\log N)$ time.
We will first discuss the strategy of the algorithm along with some lemmas that will help the analysis of the algorithm.
The description of the algorithm with a full analysis of it will be provided later in \cref{subsection:algorithm}.

Let $S$ be a string representing a sock ordering, and denote by $\leq$ a partial order on the alphabet $\Gamma$.
To decide foot-sortability efficiently, our main task is to find, if the sock ordering is foot-sortable, a total order $\leq_{\ell}$ efficiently such that $(S,\leq_{\ell})$ is foot-sortable.
The total order $\leq_{\ell}$ will be built step by step: at each step, we will find a specific letter that we may assume to be the next minimum without loss of generality.
For our convenience, for any subset $\Gamma'\subseteq \Gamma$ and any partial order $\leq$ on $\Gamma$, we say that $(S,\leq)$ is \emph{$\Gamma'$-decided} if either $(S,\leq)$ is not foot-sortable, or there is a total order $\leq_{\ell}$ extending $\leq$ with its minimum element in $\Gamma'$ such that $(S,\leq_{\ell})$ is foot-sortable.
With a slight abuse of notation, if $\Gamma'$ is a singleton $\{a\}$, then we will also say that $(S,\leq)$ is \emph{$a$-decided}.
With this definition, our task is now to find a specific letter $a$ so that $(S,\leq)$ is $a$-decided.

To see why having such a letter helps us, we will reduce the size of the problem with the help of $a$.
Given a pair $(S,\leq)$ where $S$ is a string representing a sock ordering and $\leq$ is a partial order on the alphabet, an \emph{$a$-reduction} is the following procedure:
\begin{enumerate}
    \item Test if $a$ is a minimal element with respect to $\leq$.
    If not, then the reduction fails.
    \item Extend the partial order $\leq$ to $\leq'$ via the following rule: for any subsequence $b,c,a$ in $S$ with $b,c\neq a$, we require that $b\geq c$.
    If such extension is not possible, the reduction fails.
    \item Remove all $a$'s from $S$ to get $S'$, and also restrict $\leq'$ to $\Gamma\backslash \{a\}$.
    The pair $(S',\leq')$ is the output of the reduction.
\end{enumerate}
The following lemma shows that if we have decided that $(S,\leq)$ is $a$-decided, then we may reduce the foot-sortability of $(S,\leq)$ to a smaller problem using $a$-reduction.

\begin{lemma}\label{lemma:reduction}
    Let $S$ be a string representing a sock ordering, and $\leq$ be a partial order on the alphabet $\Gamma$.
    Suppose that $a\in \Gamma$ is such that $(S,\leq)$ is $a$-decided.
    If the $a$-reduction of $(S,\leq)$ fails, then $(S,\leq)$ is not foot-sortable.
    Otherwise, if $(S',\leq')$ is the output of the $a$-reduction, then $(S,\leq)$ is foot-sortable if and only if $(S',\leq')$ is foot-sortable.
\end{lemma}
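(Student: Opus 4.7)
The plan is to reduce everything to the pattern-avoidance criterion of \cref{theorem:criterion}, which characterizes foot-sortability of $(T, \leq)$ by the existence of a linear extension $\leq_\ell$ of $\leq$ such that no triple $x <_\ell y <_\ell z$ appears in $T$ in the order $y, z, x$. The proof is then largely a matter of transferring such linear extensions between $S$ and $S'$, with one key observation doing all of the conceptual work.

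I would first dispose of the failure cases. If step~1 fails then $a$ is not minimal with respect to $\leq$, so no extension of $\leq$ can have $a$ as its minimum; combined with the $a$-decided hypothesis this forces $(S, \leq)$ to be not foot-sortable. For step~2, the crucial observation is that if $\leq_\ell$ is any linear extension of $\leq$ with $a$ as its minimum such that $(S, \leq_\ell)$ is foot-sortable, then $\leq_\ell$ automatically satisfies $b \geq_\ell c$ for every subsequence $b, c, a$ of $S$ with $b, c \neq a$: otherwise $a <_\ell b <_\ell c$ would yield exactly the forbidden pattern $b, c, a$. Hence all of the relations step~2 tries to force are already present in $\leq_\ell$, so their closure together with $\leq$ sits inside $\leq_\ell$ and is genuinely a partial order. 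Consequently, a failure at step~2 precludes the existence of such a $\leq_\ell$, and again $(S, \leq)$ must not be foot-sortable.

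When the reduction succeeds with output $(S', \leq')$, I would prove the two directions of the equivalence separately. For the forward direction, the witness $\leq_\ell$ supplied by $a$-decidedness must extend $\leq'$ by the same observation as above, so its restriction to $\Gamma \setminus \{a\}$ is a linear extension of $\leq'$; pattern-avoidance in $S'$ then follows from pattern-avoidance in $S$ since every subsequence of $S'$ is a subsequence of $S$. For the backward direction, I would take a linear extension $\leq_\ell'$ of $\leq'$ witnessing the foot-sortability of $(S', \leq')$ and prepend $a$ as the global minimum to obtain a total order $\leq_\ell$ on $\Gamma$. This extends $\leq$ because $a$ was already minimal in $\leq$. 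The only nontrivial check is pattern-avoidance in $S$: triples not involving $a$ are inherited from $\leq_\ell'$ on $S'$, while a forbidden triple involving $a$ must place $a$ as the least element and hence take the form $b, c, a$ with $b, c \neq a$ and $b <_\ell c$, which is ruled out precisely by the relation $b \geq' c$ that step~2 has already forced.

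The main obstacle, though compact once isolated, is the observation highlighted in the second paragraph: that any foot-sortable linear extension with $a$ at the bottom is forced to respect every implication added in step~2. This single fact is what makes step~2 well-defined whenever the input can possibly be reduced, and it is the hinge on which both halves of the equivalence swing.
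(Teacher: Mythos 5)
Your proposal is correct and takes essentially the same approach as the paper: both directions are handled by transferring linear extensions between $S$ and $S'$ via \cref{theorem:criterion}, with the same key observation that any foot-sortable witness having $a$ as its minimum must already satisfy every relation imposed in step~2 of the reduction (so the reduction succeeds and the witness extends $\leq'$), and with the backward direction prepending $a$ as global minimum and ruling out forbidden triples exactly as in the paper. The only difference is presentational: you treat the two failure modes explicitly, whereas the paper absorbs them into the forward direction.
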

\begin{proof}
    If $(S,\leq)$ is foot-sortable, then there exists a total order $\leq_{\ell}$ extending $\leq$ whose minimum element is $a$ such that $(S,\leq_{\ell})$ is foot-sortable.
    By \cref{theorem:criterion}, for any subsequence $b,c,a$ in $S$ with $b,c\neq a$, we must have $b\geq_{\ell} c$.
    Therefore the $a$-reduction must succeed, and $\leq_{\ell}$ extends $\leq'$.
    As $S'$ is a subsequence of $S$, we see that $(S',\leq_{\ell})$ is also foot-sortable by \cref{theorem:criterion}, and so $(S',\leq')$ is foot-sortable too.

    Suppose now that the $a$-reduction succeeds and $(S',\leq')$ is foot-sortable.
    Let $\leq_{\ell}'$ be a total order on the alphabet of $S'$ that extends $\leq'$ so that $(S',\leq_{\ell}')$ is foot-sortable.
    Let $\leq_{\ell}$ be the total order on the alphabet of $S$ that extends $\leq_{\ell}'$ by setting $a$ to be the minimum element.
    It is clear that $\leq_{\ell}$ extends $\leq$ as $a$ is a minimal element with respect to $\leq$.
    If $(S,\leq_{\ell})$ is not foot-sortable, then \cref{theorem:criterion} shows that there exists a subsequence $y,z,x$ of $S$ with $x<_{\ell}y<_{\ell}z$.
    As $(S',\leq_{\ell}')$ is foot-sortable, this shows that one of $x,y,z$ must be $a$.
    Since $a$ is the minimum element, we must have $a=x$.
    However as $\leq_{\ell}'$ extends $\leq'$, by the construction of $\leq'$ we must have $y\geq' z$, which is a contradiction.
    Therefore $(S,\leq_{\ell})$ is foot-sortable.
\end{proof}

Note that if $\leq'$ is obtained from taking a series of such reductions, then $\leq'$ is actually of a particular form.
We call a partial order $\leq$ on the alphabet of $S$ \emph{simple} if there is a prefix $S'$ of $S$ so that $a\geq b$ if and only if $a,b$ is a subsequence of $S'$.
In this case, we call the last letter of $S'$ the \emph{distinguished minimal letter}.
It is easy to see that if $(S,\leq)$ is such that $\leq$ is simple, and $(S',\leq')$ is the output of the $a$-reduction, then $\leq'$ is also simple.
As a consequence, we will focus on simple partial orders from now on.

\cref{lemma:reduction} allows us to focus on finding a letter $a$ so that $(S,\leq)$ is $a$-decided when $\leq$ is simple.
This will be done in the next subsection via a case analysis.

\subsection{Deciding the next minimum letter}
We first note that for any pair $(S,\leq)$, replacing any substring $aa$ with $a$ for any $a\in \Gamma$ does not affect its foot-sortability.
We can thus, without loss of generality, assume that there are no consecutive repeated letters.
We call the string $S$ \emph{reduced} if this is the case.
For a reduced string, call a letter \emph{lonely} if it appears only once in the string.
Call the letter \emph{unlonely} otherwise.

Now suppose that $(S,\leq)$ is such that $S$ is reduced and $\leq$ is simple.
It is clear that if all the letters are lonely, then $(S,\leq)$ is sortable.
Therefore we assume that there is some letter that is unlonely, and among all such letters, let $a$ be the one with the earliest second appearance.
Let $d$ be the distinguished minimal element.
As $S$ is reduced, we know that $d$ must appear before the second $a$.
Lastly, if it exists, let $b$ be the minimal lonely letter (with respect to $\leq$) with the earliest appearance in $S$.
In other words, $b$ has the earliest appearance among all lonely letters equal to $d$ or after $d$.
We will divide into cases based on the positions of $a,b,d$ in the string $S$.

\subsubsection{Case 1: $b$ exists and appears before the first $a$.}
This is the easiest case, and we can immediately show that in this case, $(S,\leq)$ is $b$-decided.
We start with two simple lemmas that will also be useful in the other cases.

\begin{lemma}\label{lemma:basic}
    Let $(S,\leq)$ be foot-sortable.
    For any three distinct letters $x,y,z$, if $x,y,x,z$ is a subsequence in $S$, then $z$ cannot be the minimum among $x,y,z$.
\end{lemma}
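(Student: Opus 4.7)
The plan is to argue by contradiction using \cref{theorem:criterion}. Since $(S,\leq)$ is foot-sortable, there exists a total order $\leq_{\ell}$ extending $\leq$ such that no subsequence of $S$ has the shape $b, c, a$ with $a <_{\ell} b <_{\ell} c$. Assume toward a contradiction that $z$ is the minimum of $\{x,y,z\}$ with respect to $\leq_{\ell}$, so that $z <_{\ell} x$ and $z <_{\ell} y$. Since $\leq_{\ell}$ is a total order on $\Gamma$, either $x <_{\ell} y$ or $y <_{\ell} x$, and I would handle these two subcases separately.

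In the first subcase $x <_{\ell} y$, the triple $(x,y,z)$ satisfies $z <_{\ell} x <_{\ell} y$. Taking the letters at positions $1,2,4$ of the subsequence $x,y,x,z$ exhibits the pattern $x,y,z$, i.e.\ a forbidden $b,c,a$ triple with $a = z$, $b = x$, $c = y$. In the second subcase $y <_{\ell} x$, we have $z <_{\ell} y <_{\ell} x$, and the letters at positions $2,3,4$ of $x,y,x,z$ give the subsequence $y,x,z$, a forbidden pattern with $a = z$, $b = y$, $c = x$. In either case \cref{theorem:criterion} is violated, contradicting the foot-sortability of $(S, \leq_{\ell})$ and hence of $(S, \leq)$.

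The whole argument is just a two-case check reading off the right indices from $x,y,x,z$, so there is no real obstacle; the only thing to be careful about is remembering that the "minimum" must be interpreted with respect to a witnessing total order $\leq_{\ell}$ (which exists by definition of foot-sortability of the pair $(S,\leq)$), rather than with respect to the potentially incomparable pairs under $\leq$ itself.
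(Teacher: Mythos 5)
Your proof is correct and follows essentially the same route as the paper: take a certifying total order $\leq_{\ell}$, split on whether $x <_{\ell} y$ or $y <_{\ell} x$, and extract the forbidden subsequence $x,y,z$ or $y,x,z$ from $x,y,x,z$ to contradict \cref{theorem:criterion}. Your closing remark about interpreting the minimum with respect to the witnessing total order matches how the lemma is applied later in the paper.
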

\begin{proof}
    Otherwise, suppose $\leq_{\ell}$ is a total order extending $\leq$ with $(S,\leq_{\ell})$ being foot-sortable.
    If $x>\leq_{\ell}y$ then the subsequence $y,x,z$ gives a contradiction.
    Otherwise the subsequence $x,y,z$ gives a contradiction.
\end{proof}

The second lemma says that in some scenario, we may move an element to the minimum in a total order while keeping foot-sortability.

\begin{lemma}\label{lemma:new-minimum}
    Let $(S,\leq_{\ell})$ be foot-sortable where $\leq_{\ell}$ is a total order.
    Let $m$ be the minimum element with respect to $\leq_{\ell}$, and let $m'$ be another letter so that there are no subsequences $m',z,m$ in $S$ with $z\neq m',m$.
    Let $\leq_{\ell}'$ be the total order obtained by making $m'$ minimum while keeping the remaining orders of $\leq_{\ell}$.
    Then $(S,\leq_{\ell}')$ is also foot-sortable.
\end{lemma}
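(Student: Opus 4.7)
My plan is to argue by contradiction via \cref{theorem:criterion}. I suppose $(S,\leq_\ell')$ admits a bad 231-triple: a subsequence $y,z,x$ of $S$ with $x<_\ell' y<_\ell' z$. The goal is to show this forces either a 231-triple for $\leq_\ell$ (contradicting foot-sortability of $(S,\leq_\ell)$) or else a subsequence of the form forbidden by the hypothesis.

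The first step is to pin down the shape of any such bad triple. Since $\leq_\ell'$ and $\leq_\ell$ agree on every pair not involving $m'$, if $m'\notin\{x,y,z\}$ then the same triple is already a 231-triple under $\leq_\ell$, a contradiction. Moreover $y$ and $z$ lie strictly above $x$ in $\leq_\ell'$ and therefore cannot equal the new minimum $m'$, so $x=m'$ and $y,z\neq m'$. The inequality $y<_\ell' z$ then transfers to $y<_\ell z$, and applying foot-sortability of $(S,\leq_\ell)$ to the subsequence $y,z,m'$ gives $y<_\ell m'$ (otherwise $m'<_\ell y<_\ell z$ is already 231). So $y$ is strictly below both $z$ and $m'$ in $\leq_\ell$; in particular $m\leq_\ell y$ and $m\notin\{z,m'\}$.

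The second and main stage is a case analysis on where $m$ appears in $S$ relative to the position of the chosen $z$. If $m=y$, then the subsequence $y,z,m'$ itself reads as $m,z,m'$ with $z\neq m,m'$, which is exactly the configuration ruled out by the hypothesis. Otherwise $m<_\ell y$ strictly, and I would pick any occurrence of $m$ in $S$ (guaranteed to exist since $m$ lies in the alphabet of $S$). If some such occurrence lies before the position of $z$, then combining it with $z$ and with the trailing $m'$ again yields the subsequence forbidden by the hypothesis. If instead every occurrence of $m$ comes after $z$, then picking any such occurrence produces a subsequence $y,z,m$ of $S$ with $m<_\ell y<_\ell z$, i.e.\ a 231-pattern under $\leq_\ell$, contradicting foot-sortability of $(S,\leq_\ell)$.

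No deep obstacle arises; the delicate point is the clean division of labour between the hypothesis (which handles the occurrences of $m$ at or before $z$) and the foot-sortability of $\leq_\ell$ (which handles the occurrences after $z$), together with the preliminary verification $m\notin\{z,m'\}$ that guarantees the witnessing subsequences have a middle letter distinct from the two endpoints.
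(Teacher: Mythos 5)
Your argument is essentially the paper's own proof of \cref{lemma:new-minimum}: reduce to a new 231-triple of the form $y,z,m'$ (positionally) with $y<_{\ell}z$ and $y<_{\ell}m'$, then locate the occurrences of $m$ --- an occurrence of $m$ before the chosen $z$ is excluded by the hypothesis, while an occurrence after $z$ yields the 231-triple $y,z,m$ for $\leq_{\ell}$, contradicting foot-sortability; your split ``before $z$ / after $z$'' is just a slightly cleaner version of the paper's split around $y$ and $z$. One point does need flagging: in both places where you invoke the hypothesis, you use it as forbidding subsequences in the positional order $m,\,z,\,m'$ (an $m$, then another letter, then an $m'$), whereas the statement as printed forbids $m',\,z,\,m$. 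As literally written the statement is false: take $S=m\,y\,z\,m'$ with $m<_{\ell}y<_{\ell}z<_{\ell}m'$, which is foot-sortable and satisfies the printed hypothesis vacuously (nothing follows $m'$), yet after making $m'$ minimum the subsequence $y,z,m'$ is a 231-pattern. The direction you actually used is the intended one: the paper's own proof (the steps asserting $y\neq m$ and excluding $m,y,z,m'$ and $y,m,z,m'$) and its applications (e.g.\ in \cref{lemma:case1} and \cref{lemma:case2-exceptional}, where what gets verified is that no letter other than $m,m'$ separates an occurrence of $m$ from a later occurrence of $m'$) all rely on the hypothesis ``no subsequence $m,z,m'$ with $z\neq m,m'$.'' So your proof is correct for that corrected statement and matches the paper's route, but you should say explicitly that you are reading the hypothesis with the pattern reversed (i.e.\ that the statement contains a typo), rather than asserting that $m,z,m'$ is exactly the configuration ruled out by what is written.
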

\begin{proof}
    Suppose for the sake of contradiction that $(S,\leq_{\ell}')$ is not foot-sortable.
    Then by \cref{theorem:criterion}, there must be some subsequence $y,z,x$ of $S$ with $x<_{\ell}'y<_{\ell'}z$.
    Since $(S,\leq_{\ell})$ is foot-sortable, we see that $m'\in\{x,y,z\}$, and so $x=m'$ by the minimality of $m'$.
    By the assumption, we know that $y\neq m$.
    Moreover, $z\neq m$ as $m$ is the second least letter with respect to $\leq_{\ell}'$.
    Lastly, the assumption shows that there cannot be subsequences $m,y,z,m'$ or $y,m,z,m'$.
    Therefore there must be $y,z,m,m'$ or $y,z,m',m$, showing the existence of the subsequence $y,z,m$ in $S$ regardless.
    This is now a contradiction as $m<_{\ell}y<_{\ell}z$.
\end{proof}

Those two lemma combined imply that $(S,\leq)$ is $b$-decided in this case.

\begin{lemma}\label{lemma:case1}
    Let $S$ be a reduced string that represents a sock ordering, and let $\leq$ be a simple order on the alphabet of $S$.
    Let $a$ be the unlonely letter with the earliest second appeareance.
    If the earliest minimal lonely letter $b$ in $S$ appears before the second $a$, then $(S,\leq)$ is $b$-decided.
\end{lemma}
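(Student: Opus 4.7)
The plan is to assume $(S,\leq)$ is foot-sortable and construct a total order $\leq_{\ell}'$ extending $\leq$ with $b$ as its minimum such that $(S,\leq_{\ell}')$ is foot-sortable. I would start with any total order $\leq_{\ell}$ extending $\leq$ for which $(S,\leq_{\ell})$ is foot-sortable, and let $m$ be its minimum. If $m=b$ we are done, so we may assume $m\neq b$. I would then define $\leq_{\ell}'$ by demoting $b$ to the minimum of $\leq_{\ell}$ while preserving the relative order of the remaining letters; since $b$ is a minimal element of $\leq$ (as a minimal lonely letter), this $\leq_{\ell}'$ is a legitimate extension of $\leq$.

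Verifying that $(S,\leq_{\ell}')$ is foot-sortable reduces by \cref{theorem:criterion} to ruling out 231-patterns, and any such pattern present in $(S,\leq_{\ell}')$ but not in $(S,\leq_{\ell})$ must have $b$ as its smallest element, hence take the form of a subsequence $y,z,b$ in $S$ with $y<_{\ell}z$ and $y,z\neq b$ (so both $y$ and $z$ occur at positions before $b$). I would suppose for contradiction that such a pair $(y,z)$ exists. The key observation is that, assuming $y\neq m$, one has $m<_{\ell}y<_{\ell}z$, and so any occurrence of $m$ at a position strictly after that of $z$ would produce a forbidden subsequence $y,z,m$ in $(S,\leq_{\ell})$. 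Hence every appearance of $m$ must occur at a position at most that of $z$, which in particular lies before $b$. If $m$ is unlonely, the defining property of $a$ forces $m$'s second appearance to lie at or after the second $a$ and hence after $b$ by the lemma's hypothesis, contradicting that all $m$-appearances precede $b$. If $m$ is lonely, then $m$ is itself a minimal lonely letter appearing strictly before $b$, contradicting the fact that $b$ was chosen as the earliest such letter.

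The main obstacle is justifying the assumption $y\neq m$ above. In Case 1 (where $b$ appears before the first $a$), both occurrences of $a$ follow $b$, and the analysis in the previous paragraph already rules out every other candidate for $m$ to have an appearance before $b$, so $y\neq m$ holds automatically. For the more general hypothesis of the lemma (which allows $b$ to lie between the two occurrences of $a$), the residual subcase in which $m=a$ itself requires additional care: a direct demotion of $b$ may fail to preserve foot-sortability, and so one would either need a more careful choice of the starting $\leq_{\ell}$---perhaps by first applying \cref{lemma:new-minimum} to shift the minimum away from $a$---or a separate structural argument leveraging \cref{lemma:basic} applied to a configuration of the form $a,y',a,m'$, exploiting the reducedness of $S$ together with the hypothesis $p_b<p_{a^{(2)}}$.
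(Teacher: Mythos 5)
Your overall route is the paper's: promote $b$ to the minimum of a certifying order $\leq_{\ell}$ and rule out any new forbidden pattern $y,z,b$ by locating the occurrences of the old minimum $m$. But there is a genuine gap exactly at the step you wave through, namely that ``$y\neq m$ holds automatically'' in Case 1. Your second paragraph only excludes configurations in which \emph{all} occurrences of $m$ precede $b$; it does not exclude an unlonely $m\neq a$ whose \emph{first} occurrence lies before $b$ while its second occurrence lies at or after the second $a$ (which is all the definition of $a$ gives). In that configuration $y=m$ is possible, the new pattern is $m,z,b$, and your argument ``every appearance of $m$ occurs no later than $z$'' is unavailable. The paper closes precisely this case with \cref{lemma:basic}: no occurrence of $m$ can lie strictly between the two $a$'s (its second occurrence cannot precede the second $a$), so reducedness supplies a letter $z\neq a,m$ there, and $a,z,a,m$ is then a subsequence in which $m$ is the minimum, contradicting foot-sortability of $(S,\leq_{\ell})$. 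You gesture at an argument of this shape only in your final paragraph, for a different subcase, and leave it as ``one would either need \dots or \dots'', so the proof is incomplete as written.

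A separate issue is the hypothesis itself: as printed (``$b$ appears before the \emph{second} $a$'') the statement is not provable; it is a typo for ``before the \emph{first} $a$'', which is the Case 1 setting in which the lemma is stated and used (see Line 10 of the main procedure). If $b$ lies between the two $a$'s one is in Case 3, where only $\{a,b\}$-decidedness holds (\cref{lemma:case3}, \cref{lemma:A-step}): for instance $S=apqbapq$ with the trivial simple order is foot-sortable (certified by $a<b<q<p$) but admits no certificate with minimum $b$, so it is not $b$-decided. Your closing paragraph correctly senses that the $m=a$ subcase breaks the demotion argument under the broader hypothesis, but you neither repair it (it cannot be repaired) nor restrict the statement; the correct resolution is to prove the lemma under the Case 1 hypothesis and to supply the \cref{lemma:basic} step described above for the unlonely $m\neq a$ subcase.
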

\begin{proof}
    Suppose that $(S,\leq)$ is foot-sortable and $\leq_{\ell}$ is a total order that certifies this.
    Modify $\leq_{\ell}$ to get $\leq_{\ell}'$ by making $b$ the minimum element.
    We will show that $(S,\leq_{\ell}')$ is also foot-sortable.

    Let $m$ be the minimum element with respect to $\leq_{\ell}$.
    Then the last appearance of $m$ must be at or after $b$: this is clear by the definition of $b$ if $m$ is lonely, and by the definition of $a$ if $m$ is unlonely.
    Note that if $m$ is unlonely but the first $m$ appears before $b$, then by the definition of $a$ and the reducedness of $S$ there exists some $z\neq a,m$ so that there is a subsequence $a,z,a,m$.
    This is a contradiction with \cref{lemma:basic}.
    Then $(S,\leq_{\ell}')$ is foot-sortable by \cref{lemma:new-minimum}.
    
\end{proof}

\subsubsection{Case 2: $b$ does not exist or appears after the second $a$.}
This case is slightly more complicated.
In most cases, we will be able to show that $(S,\leq)$ is $a$-decided.
However, there is one exception that requires some more care.

\begin{lemma}\label{lemma:case2}
    Let $S$ be a reduced string that represents a sock ordering, and let $\leq$ be a simple order on the alphabet of $S$.
    Let $a$ be the unlonely letter with the earliest second appeareance.
    If there are no minimal lonely letters before the second $a$ in $S$, then $(S,\leq)$ is $a$-decided unless there is a single letter $z$ between the first two $a$'s.
    In the exceptional case, $(S,\leq)$ is $\{a,z\}$-decided.
\end{lemma}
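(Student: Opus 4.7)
The plan is to analyze directly what the minimum of any witnessing total order must be, rather than to construct a new total order via \cref{lemma:new-minimum}. Suppose $(S,\leq)$ is foot-sortable (otherwise the conclusion holds vacuously) and fix a total order $\leq_{\ell}$ extending $\leq$ with $(S,\leq_{\ell})$ foot-sortable. Let $m$ be the minimum of $\leq_{\ell}$. I will show that in the non-exceptional case $m$ must equal $a$, and in the exceptional case $m\in\{a,z\}$; then the same $\leq_{\ell}$ already witnesses $a$-decidedness (resp.\ $\{a,z\}$-decidedness).

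The first step is to locate $m$. Since $\leq_{\ell}$ extends $\leq$, $m$ is a minimal element of $\leq$. If $m$ is unlonely, then by the defining property of $a$ the second occurrence of $m$ lies at or after the second $a$. If $m$ is lonely, then $m$ is a minimal lonely letter, and by hypothesis it appears at or after the second $a$. Either way, the last occurrence of $m$ in $S$ is at or after the second $a$.

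The second step is the key use of \cref{lemma:basic}. Suppose $m\neq a$, and let $y$ be any letter strictly between the first two $a$'s. Reducedness forces $y\neq a$. If additionally $y\neq m$, then $a,y,m$ are three distinct letters and $S$ contains the subsequence $a,y,a,m$ (taking both $a$'s and the last $m$); \cref{lemma:basic} then says $m$ is not the minimum of $\{a,y,m\}$, contradicting that $m$ is the global minimum of $\leq_{\ell}$. Hence every letter strictly between the first two $a$'s equals $m$. Reducedness forbids two equal consecutive letters, so there is at most one such letter, and (again by reducedness, which forbids $aa$) at least one. Thus exactly one letter appears between the first two $a$'s, and it equals $m$. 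In the non-exceptional case (at least two letters between the first two $a$'s) this is a contradiction, so $m=a$. In the exceptional case the unique intermediate letter is $z$, so either $m=a$ or $m=z$, giving $m\in\{a,z\}$.

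There is no serious obstacle here; the only subtle point is verifying that the last $m$ sits at or after the second $a$ in every case, which is exactly what the hypothesis about minimal lonely letters and the definition of $a$ together guarantee. Everything else is a straightforward pigeon-hole on the window between the two first $a$'s, combined with a single application of \cref{lemma:basic}.
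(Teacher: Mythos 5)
Your proposal is correct and follows essentially the same route as the paper: take any certifying total order, note that the hypotheses force the last occurrence of its minimum $m$ to lie after the second $a$ whenever $m\neq a$, and then apply \cref{lemma:basic} (via the subsequence $a,y,a,m$) to conclude that $m$ must be the unique letter between the first two $a$'s, hence $m=a$ in the non-exceptional case and $m\in\{a,z\}$ in the exceptional one. Your write-up just makes explicit the lonely/unlonely case distinction and the reducedness counting that the paper compresses into ``by the assumption'' and ``by the contrapositive of \cref{lemma:basic}.''
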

\begin{proof}
    If $(S,\leq)$ is foot-sortable, let $\leq_{\ell}$ be the total order extending $\leq$ certifying the foot-sortability.
    Let $m$ be the minimum element with respect to $\leq_{\ell}$.
    If $m=a$ then we are already done.
    Now if $m\neq a$, then by the assumption we must have that the last appearance of $m$ is after the second appearance of $a$.
    Therefore by the contrapositive of \cref{lemma:basic}, $m$ must be the only letter between the two $a$'s.
    This shows that we must be in the exceptional case and $m=z$, as desired.
\end{proof}

\cref{lemma:case2} reduces the second case to the exceptional case where there is a single letter $z$ between the first two $a$'s.
Deciding whether $(S,\leq)$ is $a$-decided or $z$-decided (or both) takes a bit more effort, and we do this by an exhaustive case analysis below.
We first note that if not both of $a,z$ are minimal with respect to $\leq$, then there is nothing to decide.
This happens when $z$ is the distinguished minimal letter, making $a$ not minimal and $(S,\leq)$ $z$-decided.
Therefore we just need to take care of the case where $a,z$ are both minimal with respect to $\leq$.

\begin{lemma}\label{lemma:case2-exceptional}
    Suppose that we are in the exceptional case of \cref{lemma:case2}.
    Also suppose that $a,z$ are both minimal with respect to $\leq$, i.e. $z$ is not the distinguished minimal letter $d$.
    Consider the first letter $x\neq a,z$ after the substring $aza$ in $S$.
    \begin{enumerate}
    \item If both $a,z$ appear after $x$, then $(S,\leq')$ is decided by whoever appears first.
    \item If some of $a,z$ do not appear after $x$, then $(S,\leq')$ is decided by whoever does not appear.
    \end{enumerate}
\end{lemma}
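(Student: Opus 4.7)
In case (1), with both $a$ and $z$ appearing after $x$, the plan is a subsequence analysis. Assume without loss of generality that $a$ is the first of the two to appear after $x$; let $p_3$ be this occurrence of $a$, and pick any occurrence of $z$ at some position $q' > p_3$. Together with the substring $aza$ at positions $p_1, q_1, p_2$ and the position of $x$, this exhibits the length-6 subsequence $a, z, a, x, a, z$ of $S$. Fix any total order $\leq_\ell$ extending $\leq$ that certifies foot-sortability; applying \cref{theorem:criterion} to each length-3 subsequence of this length-6 word produces five non-trivial constraints on the relative $\leq_\ell$-order of $\{a, x, z\}$. An enumeration of the six permutations of $\{a, x, z\}$ will show that only $a <_\ell x <_\ell z$ satisfies all five constraints. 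In particular $a <_\ell z$; combined with \cref{lemma:case2} (which forces the minimum of $\leq_\ell$ to lie in $\{a, z\}$), this forces the minimum to be $a$, so $(S, \leq)$ is $a$-decided. The case where $z$ appears first after $x$ is entirely symmetric, using the length-6 subsequence $a, z, a, x, z, a$ to pin down $z <_\ell x <_\ell a$ and hence $z$-decidedness.

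For case (2), assume $a$ does not appear after $x$ (the subcase where $z$ does not appear is symmetric; the subcase where neither appears is immediate from \cref{lemma:case2}). By \cref{lemma:case2}, fix a certifying total order $\leq_\ell$ with minimum in $\{a, z\}$; if the minimum is $a$ there is nothing to show, so assume it is $z$, and define $\leq_\ell'$ by promoting $a$ to the new minimum while keeping all other pairwise comparisons of $\leq_\ell$. The task is to show that $(S, \leq_\ell')$ is still foot-sortable. Any putative bad triple in $(S, \leq_\ell')$ must involve $a$ (else it would already be bad in $(S, \leq_\ell)$), and since $a$ is the new minimum it must occupy the final slot. So suppose for contradiction that there is a subsequence $y_1, y_2, a$ of $S$ with $y_1, y_2 \neq a$ distinct and $y_1 <_\ell y_2$. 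Because every $a$ lies at position at most that of $x$, both $y_1$ and $y_2$ lie strictly before $x$. Within the range from $p_1$ up to (but excluding) the position of $x$, only letters in $\{a, z\}$ appear, so if $y_2$ lies in this range then $y_2 = z$, which together with $y_1 <_\ell z$ contradicts $z$ being the minimum of $\leq_\ell$. Hence $y_2$, and therefore $y_1$, lies strictly before $p_1$; moreover neither equals $z$, for no $z$ can appear before $p_1$ (otherwise the $z$ at $q_1$ would be a second occurrence of $z$ at $q_1 < p_2$, contradicting the earliest-second-appearance property of $a$). The subsequence $y_1, y_2, z$ of $S$ (taking the $z$ at position $q_1$) then satisfies $z <_\ell y_1 <_\ell y_2$, a bad triple in $(S, \leq_\ell)$, contradicting its foot-sortability.

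The main obstacle is the combinatorial case-check in case (1): six permutations of $\{a, x, z\}$ must be tested against the five extracted inequalities, and exactly one must survive in each of the two subcases. This is mechanical but tedious. Case (2) is streamlined once the structural observation that letters appearing before $p_1$ are all distinct from $z$ is recorded, with the rest following along the same lines as the proof of \cref{lemma:new-minimum}.
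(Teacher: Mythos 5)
Your proposal is correct and follows essentially the same route as the paper: in case (1) your permutation enumeration over the subsequence $a,z,a,x,a,z$ (resp.\ $a,z,a,x,z,a$) is just an expanded form of the paper's one-line appeal to \cref{lemma:basic} applied to the pattern $a,x,a,z$, and in case (2) your promotion of the non-appearing letter to the minimum and the check that no new bad triple arises reproves \cref{lemma:new-minimum} inline. The only substantive difference is that your inline argument in case (2) remains valid even when the other letter still occurs after $x$ (a situation where the hypothesis of \cref{lemma:new-minimum} as literally stated can fail), so it is if anything slightly more careful than the paper's direct citation.
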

\begin{proof}
    Let $\leq_{\ell}$ be a total order extending $\leq$ such that $(S,\leq_{\ell})$ is foot-sortable.
    By \cref{lemma:case2}, we may assume that either $a$ or $z$ is the minimum element with respect to $\leq_{\ell}$.

    In the first case, we deal with the case where $a$ appears before $z$ after $x$.
    The other case can be dealt with in the same way.
    Since $a$ appears before $z$ after $x$, we get a subsequence $a,x,a,z$.
    By \cref{lemma:basic}, this shows that $z$ cannot be the minimum element of $\leq_{\ell}$, and so $a$ must be the minimum element of $\leq_{\ell}$, as desired.

    For the last case, we deal with the case where $z$ does not appear after $x$.
    Again, the other case can be dealt with in the same way.
    If $z$ is already the minimum element with respect to $\leq_{\ell}$, then there is nothing to prove.
    Therefore we assume that $a$ is the minimum.
    Now in the total order, move $z$ to the minimum to get another total order $\leq_{\ell}'$.
    Note that this still extends $\leq$ as $z$ is a minimal element with respect to $\leq$.
    Also $(S,\leq_{\ell}')$ must be foot-sortable by \cref{lemma:new-minimum}.
    This concludes the proof.
\end{proof}

\subsubsection{Case 3: $b$ exists an appears between the first two $a$'s.}
This case bears some similarity with the second case, though the actual argument would be a bit more complicate.
We start with showing that $(S,\leq)$ is $\{a,b\}$-decided in this case.

\begin{lemma}\label{lemma:case3}
    Let $S$ be a reduced string that represents a sock ordering, and let $\leq$ be a simple partial order on the alphabet of $S$.
    Let $a$ be the unlonely letter with the earliest second appearance.
    Suppose $b$ is the first minimal lonely letter, and it is between the first two $a$'s.
    Then $(S,\leq)$ is $\{a,b\}$-decided.
\end{lemma}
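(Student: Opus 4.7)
The plan is to take an arbitrary total order $\leq_\ell$ extending $\leq$ that witnesses the foot-sortability of $(S,\leq)$, with minimum element $m$, and to show that when $m \notin \{a,b\}$ one can modify $\leq_\ell$ into a new total order $\leq_\ell'$ that still witnesses foot-sortability but with $b$ as the minimum. This suffices for the $\{a,b\}$-decidedness claim.

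The first substep is to locate $m$ in $S$, assuming $m \neq a,b$. The letter $m$ must be lonely: if it were unlonely, then since $m \neq a$ the definition of $a$ puts the second occurrence of $m$ strictly after the second $a$, producing the subsequence $a,b,a,m$ in $S$ (the two $a$'s, the letter $b$ between them, and the second $m$), which by \cref{lemma:basic} forbids $m$ from being the minimum of $\{a,b,m\}$, contradicting the global minimality of $m$. Moreover, $m$ is minimal in $\leq$ as the minimum of an extension, so by the defining property of $b$ and the fact that $m \neq b$, the unique occurrence of $m$ in $S$ lies strictly after $b$. A second application of \cref{lemma:basic} to the subsequence $a,b,a,m$ rules out $m$ occurring at or after the second $a$, so $m$ sits strictly between $b$ and the second $a$.

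The second substep is the modification. Define $\leq_\ell'$ by placing $b$ as the new minimum and keeping the relative order of all other letters as in $\leq_\ell$. Minimality of $b$ in $\leq$ ensures $\leq_\ell'$ still extends $\leq$. To verify $(S,\leq_\ell')$ is foot-sortable via \cref{theorem:criterion}, consider a hypothetical bad subsequence $y,z,x$ in $S$ with $x <_\ell' y <_\ell' z$. Triples avoiding $b$ have identical relations in $\leq_\ell$ and $\leq_\ell'$ and so are ruled out by foot-sortability of $\leq_\ell$; triples with $b$ in the position of $y$ or $z$ are ruled out because $b$ is the new minimum. The only case needing work is $x = b$, in which $y,z$ precede $b$ in $S$ and both differ from $b$ and from $m$ (since $m$ lies strictly after $b$), so the relative order of $\{y,z\}$ in $\leq_\ell'$ matches that in $\leq_\ell$. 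The structural lever is the subsequence $y,b,m$ in $S$: applied to $\leq_\ell$, it forces $y >_\ell b$, since otherwise $m <_\ell y <_\ell b$ would be a violation. Then the subsequence $y,z,b$ in $\leq_\ell$ being non-violating forces $y \geq_\ell z$, which transfers to $\leq_\ell'$ and contradicts $y <_\ell' z$.

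The main obstacle is identifying this modification: the natural attempt to invoke \cref{lemma:new-minimum} directly is blocked, since the subsequence $a,b,m$ forbids swapping $m$ with $a$, and any letter between $b$ and $m$ in $S$ forbids swapping $m$ with $b$. Bypassing this requires constructing $\leq_\ell'$ by hand and exploiting the fact that $m$ lies after $b$ to force every letter preceding $b$ in $S$ to dominate $b$ in $\leq_\ell$, which is exactly what drives the verification in the case $x = b$ above.
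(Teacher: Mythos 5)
Your proof is correct and follows the paper's argument almost verbatim in its first half: exactly as in the paper, you show that if the witnessing minimum $m$ is neither $a$ nor $b$, then $m$ is lonely and its unique occurrence lies strictly after $b$ (and before the second $a$), and you then promote $b$ to the minimum. The only divergence is the final step: the paper simply cites \cref{lemma:new-minimum}, while you verify the swap by hand because you read that lemma's hypothesis literally (``no subsequence $m',z,m$'') and correctly observed that other letters may sit between $b$ and $m$. However, the proof of \cref{lemma:new-minimum} and its other applications show that the intended hypothesis is the mirrored condition ``no subsequence $m,z,m'$'' (old minimum, then another letter, then the new minimum), which holds vacuously here since $m$ is lonely and occurs after $b$; so the paper's one-line citation is sound under the intended reading, and what you perceived as an obstacle is an artifact of the swapped roles of $m$ and $m'$ in that statement. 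Your inline verification (using the subsequence $y,b,m$ to force $y>_{\ell}b$ and then contradicting via $y,z,b$) is a correct specialization of that lemma's proof, so nothing is lost; your write-up is just slightly more self-contained, and it usefully flags the imprecision in the statement of \cref{lemma:new-minimum}.
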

\begin{proof}
    Suppose that $(S,\leq)$ is foot-sortable.
    Let $\leq_{\ell}$ be a total order that extends $\leq$ such that $(S,\leq_{\ell})$ is foot-sortable.
    Let $m$ be the minimum with respect to $\leq_{\ell}$.
    If $m\in \{a,b\}$ then we are already done, so let us assume that $m\neq a,b$.
    This shows that the last appearance of $m$ must be before the second $a$ by \cref{lemma:basic}.
    By the definition of $a$, this shows that $m$ is lonely, and by the definition of $b$ we know that $m$ appears after $b$.
    Therefore, by \cref{lemma:new-minimum}, we may modify $\leq_{\ell}$ by making $b$ the minimum to get $\leq_{\ell}'$ so that $(S,\leq_{\ell}')$ is foot-sortable.
    This concludes the proof.
    
\end{proof}

The lemma shows that it suffices to consider the two choices we may have.
Therefore, to proceed, we would either take an $a$-reduction or a $b$-reduction and continue.
For our convenience, if we decide to take an $a$-reduction in Case 3, then we say that we take an \emph{A-step}.
Otherwise we say that we take a \emph{B-step}.

Note that if $a$ is not minimal, then we can only take a B-step.
We record this as the following corollary.

\begin{corollary}\label{corollary:case3}
    In the setting of \cref{lemma:case3}, if $a$ is not a minimal element, or equivalently, if $d$ comes after the first $a$, then $(S,\leq)$ is $b$-decided.
\end{corollary}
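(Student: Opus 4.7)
The plan is to deduce this directly from \cref{lemma:case3}, using the hypothesis on $a$ to rule out one of the two possibilities for the next minimum. \cref{lemma:case3} already guarantees that $(S,\leq)$ is $\{a,b\}$-decided. If $(S,\leq)$ is not foot-sortable, then it is $b$-decided by definition. Otherwise there exists a total order $\leq_\ell$ extending $\leq$ whose minimum lies in $\{a,b\}$ such that $(S,\leq_\ell)$ is foot-sortable. Since any total order extending $\leq$ must preserve every strict inequality in $\leq$, the minimum of $\leq_\ell$ must be a $\leq$-minimal element of the alphabet. By hypothesis $a$ is not such an element, so the minimum of $\leq_\ell$ must be $b$, giving the claim.

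The remaining point is the parenthetical equivalence between ``$a$ is not a minimal element of $\leq$'' and ``$d$ comes after the first $a$'' in $S$. I would unpack this directly from the definition of a simple partial order: letting $S'$ be the prefix of $S$ ending at $d$ that defines $\leq$, we have $x\geq y$ iff $x,y$ appears as a subsequence of $S'$. If $d$ lies strictly after the first $a$ in $S$, then $a$ appears in $S'$ strictly before its last letter $d$, so $a\geq d$ holds while $d\geq a$ fails, showing that $a$ is strictly greater than $d$ and hence not minimal. In the remaining cases, either $d$ equals the first $a$ (so $a=d$ is the distinguished minimum itself, hence minimal) or $d$ comes strictly before the first $a$ (so $a$ does not appear in $S'$ at all, making $a$ incomparable to every other letter and therefore trivially minimal).

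The main and essentially only obstacle is careful bookkeeping of the definition of ``simple'' and of ``$\Gamma'$-decided''; the logical content of the corollary is a one-line consequence of \cref{lemma:case3}, with no additional combinatorial argument required on top of what is already in that lemma.
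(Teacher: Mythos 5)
Your proposal is correct and takes essentially the same route as the paper, which records this corollary as an immediate consequence of \cref{lemma:case3}: since any total order extending $\leq$ must have a $\leq$-minimal letter as its minimum, the non-minimal $a$ is ruled out and only $b$ remains. Your unpacking of the equivalence between ``$a$ is not $\leq$-minimal'' and ``$d$ comes after the first $a$'' via the definition of a simple partial order is also accurate and fills in the only detail the paper leaves implicit.
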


Therefore, from now on, we assume that $a,b$ are both minimal with respect to $\leq$.
There is yet another case where we can, without loss of generality, just take a B-step.

\begin{lemma}\label{lemma:B-step}
    In the setting of \cref{lemma:case3}, if both $a,b$ are minimal with respect to $\leq$ and $b$ appears right after the first $a$ in $S$, then $(S,\leq)$ is $b$-decided.
\end{lemma}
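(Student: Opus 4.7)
The plan is to assume $(S,\leq)$ is foot-sortable (otherwise there is nothing to prove) and invoke \cref{lemma:case3} to get a total order $\leq_{\ell}$ extending $\leq$, certifying foot-sortability, whose minimum element is either $a$ or $b$. If the minimum is $b$ we are done, so the real task is to reduce the case where $a$ is the minimum to the case where $b$ is. For this I will construct a new total order $\leq_{\ell}'$ by demoting $b$ to the absolute minimum of $\leq_{\ell}'$ and keeping all other relative orders of $\leq_{\ell}$ unchanged. Since $b$ is assumed minimal with respect to $\leq$, the new order $\leq_{\ell}'$ still extends $\leq$, and the only thing to verify is that $(S,\leq_{\ell}')$ remains foot-sortable.

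The verification will be a contradiction argument using \cref{theorem:criterion}. Any hypothetical bad subsequence $y,z,x$ with $x <_{\ell}' y <_{\ell}' z$ must involve $b$, because $\leq_{\ell}$ and $\leq_{\ell}'$ differ only at $b$ and $(S,\leq_{\ell})$ is foot-sortable. Moreover, since $b$ is the minimum of $\leq_{\ell}'$, one must have $x=b$, and so the potential violation is a subsequence $y,z,b$ with $y,z \neq b$ and $y <_{\ell}' z$.

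This is where the hypothesis that $b$ is immediately after the first $a$ does the work. Let $i$ be the position of the first $a$, so that $b$ sits at position $i+1$; then $y$ and $z$ both lie at positions at most $i$. The only occurrence of $a$ in that prefix is the first $a$ itself, so the case $y=a$ is impossible (there is no room for $z$ strictly between positions $i$ and $i+1$), while the case $z=a$ forces $y <_{\ell}' a$, and in $\leq_{\ell}'$ the only element below $a$ is $b$, contradicting $y \neq b$. In the remaining case $y,z \neq a$, both letters live at positions strictly before $i$, so $y,z$ followed by the first $a$ is itself a subsequence of $S$; since $a$ is the minimum of $\leq_{\ell}$, foot-sortability of $(S,\leq_{\ell})$ via \cref{theorem:criterion} forces $z <_{\ell} y$, and hence $z <_{\ell}' y$, contradicting $y <_{\ell}' z$.

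The main subtlety, and the reason one cannot simply quote \cref{lemma:new-minimum} with $m=a$ and $m'=b$, is that there may very well exist letters strictly between $b$ and a later occurrence of $a$, producing a subsequence $b,z,a$ which blocks the hypothesis of that lemma. The workaround is precisely the argument above: the extra information that $a$ was the minimum of $\leq_{\ell}$ converts every candidate violation after demoting $b$ into a pattern $y,z,a$ with the \emph{first} $a$, which was already forbidden in $\leq_{\ell}$. Once this translation is set up, the rest is the short case analysis on whether $y$ or $z$ equals $a$.
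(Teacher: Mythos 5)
Your proof is correct, and it follows the paper's skeleton exactly up to the last step: reduce via \cref{lemma:case3} to the case where the certifying order $\leq_{\ell}$ has minimum $a$, demote $b$ to the bottom, and check foot-sortability of the modified order. Where you diverge is the final verification: the paper simply cites \cref{lemma:new-minimum} with $m=a$, $m'=b$, while you verify by hand, and your stated reason for avoiding the citation is worth commenting on. You are right that the hypothesis of \cref{lemma:new-minimum} \emph{as printed} (no subsequence $m',z,m$, i.e.\ no $b,z,a$) can fail here — e.g.\ in $abcac$ — but this is a transposition typo in that lemma's statement: its own proof (which rules out $m,y,z,m'$ and $y,m,z,m'$) and its other applications (\cref{lemma:case1}, \cref{lemma:case2-exceptional}) all use the hypothesis that there is no subsequence $m,z,m'$ with $z\neq m,m'$, i.e.\ old minimum, then another letter, then the new minimum. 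Under the intended reading the citation is immediate in the present setting: since $b$ is lonely and sits immediately after the first $a$, no subsequence $a,z,b$ with $z\neq a,b$ can exist, and the paper's one-line proof goes through. Your direct argument — any violating triple must be $y,z,b$ with $y,z$ occurring before $b$, hence at or before the first $a$, so that minimality of $a$ in $\leq_{\ell}$ together with \cref{theorem:criterion} forces $z<_{\ell}y$ and kills the violation — is a correct, self-contained re-proof of the relevant instance of \cref{lemma:new-minimum}, and it has the incidental merit of not depending on how that lemma's hypothesis is transcribed.
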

\begin{proof}
    By \cref{lemma:case3}, we just need to deal with the case where there is some total order $\leq_{\ell}$ extending $\leq$ so that $(S,\leq_{\ell})$ is foot-sortable and $a$ is minimum with respect to $\leq_{\ell}$.
    As $b$ is minimal with respect to $\leq$, we may modify $\leq_{\ell}$ to get $\leq_{\ell}'$ extending $\leq$ so that $b$ is minimum with respect to $\leq_{\ell}'$ while keeping the remaining orders.
    We are then done by \cref{lemma:new-minimum}, which shows that $(S,\leq_{\ell}')$ is foot-sortable.
\end{proof}

Next, we show that in most of the remaining cases, $(S,\leq)$ is $a$-decided.
There is some exceptional case that we will need to deal with, and this is similar to \cref{lemma:case2}.

\begin{lemma}\label{lemma:A-step}
    In the setting of \cref{lemma:case3}, if both $a,b$ are minimal with respect to $\leq$, and $b$ is not right after the first $a$ in $S$, then $(S,\leq)$ is $a$-decided unless there is exactly one unlonely letter $z$ between the first two $a$'s.
\end{lemma}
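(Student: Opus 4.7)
The plan is as follows. Assume $(S,\leq)$ is foot-sortable (otherwise the claim is automatic). By \cref{lemma:case3}, there is a foot-sortable total order $\leq_\ell$ extending $\leq$ whose minimum lies in $\{a,b\}$; we may assume this minimum is $b$, and we aim to construct another foot-sortable extension with $a$ as the minimum. A preparatory observation: since $a,b$ are both minimal in the simple order $\leq$, an inspection of the defining prefix $S^*$ forces the distinguished minimal letter to be $a$ and $S^*$ to end at the first $a$. In particular, $b$ and every letter between the first two $a$'s is minimal in $\leq$, so we are free to move any such letter to the bottom of a total order while still extending $\leq$.

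In the zero-unlonely case, the letters $m_1,\ldots,m_r$ between the first two $a$'s are all lonely, with $b=m_j$ for some $j\geq 2$. I would iteratively invoke \cref{lemma:new-minimum} along the chain $b\to m_{j-1}\to m_{j-2}\to\cdots\to m_1\to a$. At each step the two letters involved are lonely and occupy consecutive positions in $S$, so no forbidden subsequence of the form required by \cref{lemma:new-minimum} can exist, and foot-sortability is preserved.

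In the $\geq 2$ unlonely case, for any two unlonely letters $y_1,y_2$ between the first two $a$'s, the fact that $a$ has the earliest second occurrence implies that the second occurrences of $y_1$ and $y_2$ both lie strictly after the second $a$. Hence the subsequences $a,y_1,a,y_2$ and $a,y_2,a,y_1$ both appear in $S$, and \cref{lemma:basic} applied to each rules out $y_1$ and $y_2$ as the minimum of $\{a,y_1,y_2\}$ in $\leq_\ell$, forcing $a<_\ell y_1,y_2$. Iterating over all such pairs yields $a<_\ell y$ for every unlonely $y$ between the first two $a$'s, and a direct foot-sortability argument on subsequences $y,y',a$ then enforces strict decreasing order among these unlonely letters in $\leq_\ell$ along their appearances in $S$. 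Using these strong constraints, I would construct $\leq_\ell''$ by moving $a$ to the minimum of $\leq_\ell$, relocating any letters between $b$ and the second $a$ to sit just above $a$ via further applications of \cref{lemma:new-minimum}, and then verifying that the derived inequalities preclude any new bad triples.

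The main obstacle lies in this last construction in the $\geq 2$ case: while \cref{lemma:basic} pins down the relative order of $a$ and the unlonely letters between the first two $a$'s, controlling potential bad triples of the form $b,z,a$ (with $z$ a letter between $b$ and the second $a$ in $S$) after bringing $a$ to the bottom requires a careful sequence of further adjustments. The non-exceptional hypothesis—that there are not exactly one unlonely letter between the first two $a$'s—is precisely what rules out the obstruction that would otherwise prevent these adjustments from succeeding.
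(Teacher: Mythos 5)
Your argument is incomplete precisely at the point you flag as ``the main obstacle,'' and that point is the entire content of the lemma. The constraints you derive from \cref{lemma:basic} (that $a<_{\ell}y$ for every unlonely $y$ between the first two $a$'s, and that such $y$'s must be decreasing) are correct but far from sufficient: after moving $a$ to the bottom of $\leq_{\ell}$, the new bad triples have the form $q,r,a$ where $q,r$ occur before the \emph{last} occurrence of $a$, and $q,r$ need not lie between the first two $a$'s at all --- they can sit before the first $a$, or between the second $a$ and a later $a$ (recall this lemma lives in the general algorithm section, so $a$ is not assumed to appear only twice). Your derived inequalities say nothing about those pairs, and the triples $b,z',a$ you mention (with $b$ the old minimum) are exactly the kind the proposed ``relocations'' via \cref{lemma:new-minimum} cannot obviously repair, since \cref{lemma:new-minimum} itself requires a hypothesis on subsequences ending at the current minimum that you have not verified. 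Asserting that ``the non-exceptional hypothesis is precisely what rules out the obstruction'' is a restatement of the lemma, not a proof of it. Two smaller issues: the ``zero-unlonely'' case is vacuous (since $b$ is the earliest minimal lonely letter and is not right after the first $a$, the letter immediately after the first $a$ is necessarily unlonely, so your case description, in which $b=m_j$ with all $m_i$ lonely, is internally inconsistent); and both $a,b$ being minimal does not force the distinguished minimal letter to be $a$, only that the defining prefix ends at or before the first $a$ --- though the consequence you use (letters strictly between the first two $a$'s are minimal) still holds.

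For comparison, the paper does not attempt any surgery on the certifying total order. Instead it argues at the level of reductions: if a certificate with minimum $b$ existed, then by \cref{lemma:reduction} the $b$-reduction preserves foot-sortability, but it also creates $a>z$ where $z$ is the (unlonely) letter right after the first $a$; the subsequent steps are forced (by \cref{corollary:case3} one must keep taking B-steps while in Case 3), and after stripping all lonely letters between the first two $a$'s one lands in Case 2 with at least two letters between the $a$'s, i.e.\ the non-exceptional case of \cref{lemma:case2}, where the instance is $a$-decided even though $a$ is no longer minimal --- so that instance is not foot-sortable, contradicting the preservation of foot-sortability. Hence no minimum-$b$ certificate exists, and \cref{lemma:case3} then forces the minimum to be $a$. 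If you want to salvage your approach, the cleanest fix is to adopt this contradiction-via-forced-reductions structure rather than trying to exhibit a minimum-$a$ order directly.
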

\begin{proof}
    Let $z$ be the letter right after $a$.
    Note that by the definition of $b$, we have that $z$ is unlonely.
    Therefore once we take a B-step, $a$ would no longer be minimal as we would have $a>z$ after the B-step.
    Note that $S$ will still be reduced after the reduction, and the role of $a$ will not change: it will still be the unlonely letter with the earliest second appearance.
    It is also clear that now we are either in Case 2 or Case 3.
    Note that if we are in Case 3 again, then we are forced to take a B-step by \cref{corollary:case3}.
    It is clear that if we keep doing B-steps until we end up in Case 2, then we are simply removing all the lonely letters between the first two $a$'s via the corresponding reductions.
    Let $(S',\leq')$ be the result after the B-steps until we get to Case 2.
    Note that once we are in Case 2, \cref{lemma:case2} says that $(S',\leq')$ is $a$-decided unless we are in the exceptional case.
    In the non-exceptional case, this immediately shows that $(S',\leq')$ is not foot-sortable as $a$ is not minimal (recall that $a>z$).
    This shows that taking a B-step only makes sense if $z$ is the only unlonely letter between the first two $a$'s.
    
\end{proof}

To finish up, we need to deal with the exceptional case in \cref{lemma:A-step}.
This will be done by a similar argument as \cref{lemma:case2-exceptional}.

\begin{lemma}\label{lemma:case3-exceptional}
    Suppose that we are in the exceptional case in \cref{lemma:A-step}.
    Let $x$ be the first letter after the second $a$ that is not $a$ or $z$.
    \begin{enumerate}
        \item If after the second $a$, there is some $z$ appearing before some other $a$, then $(S,\leq)$ is $b$-decided.
        \item If there is no $z$ after $x$, then $(S,\leq)$ is $b$-decided.
        \item If none of the above holds, then $(S,\leq)$ is $a$-decided.
    \end{enumerate}
\end{lemma}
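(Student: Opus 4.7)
The plan is to parallel the proof of Lemma~\ref{lemma:case2-exceptional}, using Theorem~\ref{theorem:criterion} (to forbid bad orderings) and Lemma~\ref{lemma:new-minimum} (to swap the minimum in a witnessing total order) as the two workhorses. By Lemma~\ref{lemma:case3}, if $(S,\leq)$ is foot-sortable then some witnessing total order $\leq_{\ell}$ extending $\leq$ has its minimum in $\{a,b\}$; so in each subcase I need only argue that the designated candidate is always a valid minimum.

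For case (1), I assume $\leq_{\ell}$ has $a$ as minimum and derive a contradiction with Theorem~\ref{theorem:criterion}. The hypothesis produces the subsequence $a,z,a,z,a$ in $S$, and combining it with the position of $b$ between the first two $a$'s, a short case split on the relative order of $b$ and the first $z$ in $S$ and of $b$ and $z$ in $\leq_{\ell}$ produces in every subcase a forbidden pattern $p,q,a'$ with $a'<_{\ell}p<_{\ell}q$. This forces the minimum to be $b$, as desired.

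For case (2), the absence of $z$ after $x$, together with reducedness and the failure of (1), pins down the structure of $S$ tightly; in particular, $a$ occurs exactly twice and one quickly controls what may appear between the second $a$ and $x$. Assuming the witness has $a$ as minimum, I try to swap to $b$ via Lemma~\ref{lemma:new-minimum}. A direct swap may be obstructed by a subsequence $b,w,a$, but the definitions of $a$, $b$, and $z$ force $w=z$; I then perform the two-stage swap $a\to z\to b$, whose two invocations of Lemma~\ref{lemma:new-minimum} are verified respectively via the absence of $z$ after $x$ and via the relative positions of $b$ and $z$.

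For case (3), I assume the witness has $b$ as minimum and show $a$ can also be taken. This is the heaviest subcase in terms of bookkeeping. I split on the relative positions of $b$ and the first $z$ between the two $a$'s, and on whether a lonely letter lies between the first $a$ and $b$. In each subcase I either directly apply Lemma~\ref{lemma:new-minimum} to swap $b$ for $a$, or, if that swap is obstructed, invoke Theorem~\ref{theorem:criterion} together with the joint failure of (1) and (2) to produce a forbidden pattern showing that the assumed witness with $b$ as minimum cannot have existed in the first place. The main obstacle throughout is the case analysis in (3), since several intermediate letters can appear between the first $a$ and $b$ and one must argue that the conditions (1)--(3) partition these branches cleanly.
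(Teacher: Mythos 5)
Your part (1) matches the paper's argument (it is just \cref{lemma:basic} applied to the subsequence $z,b,z,a$), but parts (2) and (3) have genuine gaps, and they stem from the same misreading of the exceptional case. In the setting of \cref{lemma:A-step}, the substring strictly between the first two $a$'s is exactly $zb_1\cdots b_k$ with $z$ immediately after the first $a$ and $b=b_1$ the first of the lonely letters $b_1,\dots,b_k$; so there is nothing to split on concerning ``the relative positions of $b$ and the first $z$.'' In part (2), your claim that the only obstruction to \cref{lemma:new-minimum} is a subsequence $b,z,a$ is wrong in both directions: under the failure of (1) a pattern $b,z,a$ cannot occur (every $z$ after $b$ lies after the second $a$ and has no $a$ after it), whereas the patterns $b,b_i,a$ for $i\ge 2$ do occur whenever $k\ge 2$, and further obstructions $b,w,a$ arise because $a$ may occur again after $x$ (this section has no $2$-boundedness hypothesis, so ``$a$ occurs exactly twice'' is unjustified). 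Your repair, the two-stage swap $a\to z\to b$, fails at the first stage: $z,b,a$ (first $z$, then $b$, then the second $a$) is always a subsequence, so \cref{lemma:new-minimum} never lets you make $z$ the minimum. The paper instead abandons single-element swaps here and builds a new total order with $b_1<'\cdots<'b_k<'z$ below all other letters, verifying directly against \cref{theorem:criterion} that no forbidden triple is created.

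Part (3) is more seriously off course. The swap $b\to a$ via \cref{lemma:new-minimum} is \emph{always} obstructed by the subsequence $a,z,b$, so you are always in your fallback branch; but that branch cannot work, because in case (3) a witness with minimum $b$ can genuinely exist, and the lemma does not claim otherwise --- it only claims that a witness with minimum $a$ also exists. For instance $S=azbaxz$ satisfies the hypotheses of case (3) and admits the witness $b<z<x<a$, yet is $a$-decided via $a<b<z<x$. So ``showing that the assumed witness with $b$ as minimum cannot have existed'' is not a viable strategy. The paper's proof instead transforms the min-$b$ witness: after normalizing so that $b_1<_{\ell}\cdots<_{\ell}b_k<_{\ell}z$ are smallest, it first uses not-(1), not-(2) and \cref{lemma:basic} to show there are no $a$'s after $x$, then replaces the bottom of the order by $a<'b_k<'\cdots<'b_1<'z$ (note the reversal of the $b_i$'s) and checks via \cref{theorem:criterion} that any forbidden triple would have to live inside the subsequence $azb_1\cdots b_kaz\cdots z$, which is foot-sortable under this order. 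Neither the reversal trick, the ``no $a$ after $x$'' step, nor the block-restriction argument appears in your outline, and without them the case does not go through.
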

\begin{proof}
    As we are in the exceptional case, the substring strictly between the first two $a$'s is $zb_1b_2\cdots b_k$ for some lonely $b_1=b,\ldots, b_k$.
    Recall that if we take a B-step, then we are forced to reduce by $b_1,b_2,\ldots, b_k$ and then $z$.
    Assume that $(S,\leq)$ is foot-sortable.
    Let $\leq_{\ell}$ be a total order that certifies the foot-sortability of $(S,\leq)$.
    By \cref{lemma:case3}, we may assume that the minimum element of $\leq_{\ell}$ is either $a$ or $b$.

    In the first case, there is a subsequence $a,z,b_1,a,z,a$ of $S$.
    In particular, there is a subsequence $z,b_1,z,a$, showing that $a$ cannot be the minimum element of $\leq_{\ell}$.

    The second case is slightly more complicated, though the argument still roughly follows the proof of \cref{lemma:new-minimum}.
    We suppose that $a$ is the minimum element of $\leq_{\ell}$.
    Consider a new total order $\leq_{\ell}'$ with $b_1<_{\ell}'\cdots <_{\ell}'b_k<_{\ell}'z<_{\ell}'x$ for any $x\not\in \{z,b_1,\ldots, b_k\}$, and also $x<_{\ell}'y$ if and only if $x<_{\ell}y$ for any $x,y\not\in\{z,b_1,\ldots, b_k\}$.
    It is clear that $\leq_{\ell}'$ still extends $\leq$, and we need to show that $(S,\leq_{\ell}')$ is foot-sortable.
    Otherwise, by \cref{theorem:criterion}, there is a subsequence $q,r,p$ of $S$ with $p<_{\ell}'q<_{\ell}'r$.
    As $(S,\leq_{\ell})$ is foot-sortable, we know that $p\in \{z,b_1,\ldots,b_k\}$.
    It is clear that $q,r\not\in \{z,b_1,\ldots, b_k\}$ if $p=z$.
    If $p=b_i$, then $q\not\in \{b_1,\ldots, b_k\}$ and so $r\not\in \{z,b_1,\ldots, b_k\}$.
    As $q$ appears before $r$, this forces $q\not\in \{z,b_1,\ldots, b_k\}$.
    This ensures the existence of the subsequence $q,r,a$.
    By the minimality of $a$ with respect to $\leq$, we know that $r\neq a$.
    We also have $q\neq a$ as otherwise $r$ will be forced to be in $\{z,b_1,\ldots, b_k\}$.
    This is now a contradiction as $a<_{\ell} q<_{\ell} r$ by the minimality of $a$.
    Thus $(S,\leq_{\ell}')$ is foot-sortable.

    For the last case, assume that $\leq_{\ell}$ has $b_1$ as the minimum element.
    Then we may, without loss of generality, assume that $b_1<_{\ell}\cdots <_{\ell} b_k < z$ are the first few elements in the total order.
    Let $\leq_{\ell}'$ be such that $a<_{\ell}' b_k<_{\ell}'\cdots <_{\ell}' b_1 < z$ are the first few elements, and the rest of the order is unchanged.
    If is clear that $\leq_{\ell}'$ still extends $\leq$.
    If $(S,\leq_{\ell}')$ is not foot-sortable, then again by \cref{theorem:criterion} there is some subsequence $q,r,p$ with $p<_{\ell}'q<_{\ell}'r$ and $p\in \{a,b_1,\ldots, b_k\}.$
    We first note that if some $a$ appears after $x$, then the assumption of this case shows that this $a$ must be before some $z$, implying that there is a subsequence $a,x,a,z$.
    This is a contradiction with \cref{lemma:basic}, and so there are no $a$'s after $x$.
    Therefore either $q\in \{a,z,b_1,\ldots, b_k\}$ or $q,r$ both appear before the first $a$.
    The latter is impossible as otherwise the subsequence $q,r,b_1$ already shows that $(S,\leq_{\ell})$ is not foot-sortable.
    However, if $q\in \{a,z,b_1,\ldots,b_k\}$, then we also have $r\in \{a,z,b_1,\ldots, b_k\}$.
    If we delete all other letters in $S$, we get the subsequence $S'=azb_1b\cdots b_kazzz\cdots z$.
    It is easy to verify that $(S',\leq_{\ell}')$ is foot-sortable, which contradicts the existence of $p,q,r$.
    Therefore $(S,\leq_{\ell}')$ has to be foot-sortable, as desired.
\end{proof}

\subsection{Algorithm and analysis}\label{subsection:algorithm}
With all the ideas above, we can already implement an algorithm for deciding foot-sortability easily.
Here, instead of implementing the algorithm na\"ively, we show how one can carefully design the algorithm so that the run time is $O(N\log N)$, where $N$ is the length of the sock ordering.
For readers that are only interested in a polynomial-time algorithm instead of the implementation detail for the faster algorithm that runs in $O(N\log N)$ time, the key ideas have already been all stated above, and the main procedure \cref{procedure:main} shows more in details how those ideas can be combined to give an algorithm. 

To avoid distraction, we will only focus on deciding whether a given sock ordering is foot-sortable or not, but it is easy to modify the algorithm slightly so that if the sock ordering is foot-sortable, the algorithm also outputs a certificate $\leq_{\ell}$.

\subsubsection{Input}
For simplicity, we will assume that the sock ordering is given as a sequence $s[0],\ldots, s[N-1]$ where each $s[i]$ is a non-negative integer in $[0,N)$ that represents the color of the $i$-th sock.

\subsubsection{Data structures}
We maintain several data structures to accelerate the procedures that we will introduce later.
First, we maintain a doubly linked list $L$ that represents the current state of the sock ordering.
In addition, we keep track of the position of the distinguished minimal element $DistMinPos$.
We also maintain an array of dictionaries implemented by self-balancing binary search trees (BST) ${Pos}[0],\ldots, {Pos}[N-1]$ where the key represents a position, and ${Pos}[j][i]$ is the pointer to the $i$-th sock in the list $L$ if $s_i=j$.
Lastly, we maintain a dictionary ${AllPos}$ that combines all the dictionaries above, a dictionary ${LonelyPos}$ that only combine the ${Pos}[j]$'s for lonely $j$'s, and a dictionary ${SecondPos}$ that only records the second appearances of the unlonely colors.

\subsubsection{Procedures}
We first explain how we preprocess the input to initialize the various data structures listed above.
Along the way, we can also make the string reduced, and we will also maintain the reducedness of the string starting from there.

\begin{algorithm}[H]
\floatname{algorithm}{Procedure}
\caption{\textsc{Preprocess}()}

\begin{algorithmic}[1]
    \STATE $DistMinPos \leftarrow 0$
    \STATE $LastColor \leftarrow \emptyset$
    \FOR{$i$ from $0$ to $N-1$}
        \IF{$s[i]\neq LastColor$}
            \STATE Append node $n=(s[i],i)$ to $L$
            \STATE $Pos[s[i]][i]\leftarrow {}^*n$, $Allpos[i]\leftarrow {}^*n$
            \STATE $LastColor\leftarrow s[i]$
            \IF{$\textsc{size}(Pos[s[i]])=1$}
                \STATE $LonelyPos[i]\leftarrow {}^*n$
            \ELSIF{$\textsc{size}(Pos[s[i]])=2$}
                \STATE $LonelyPos.\textsc{delete}({}^*Pos[s[i]].\textsc{front}.\textsc{second})$
                \COMMENT{Delete the first appearance of $s[i]$}
                \STATE $SecondPos[i]\leftarrow {}^*n$
            \ENDIF
        \ENDIF
    \ENDFOR
\end{algorithmic}
\end{algorithm}

Recall that throughout the algorithm, we will do $a$-reductions for various colors $a$.
Therefore it is useful to have a subroutine that deletes a number from the sequence while keeping the string reduced, and a procedure that uses this subroutine to do the $a$-reduction for some $a$.
In the following, the pointer $p$ points to the node that we wish to delete from $L$.

\begin{algorithm}[H]
\floatname{algorithm}{Procedure}
\caption{\textsc{Delete}(Pointer $p$)}

\begin{algorithmic}[1]
    \STATE $(Color,CurrentPos)\leftarrow {}^*p$
    \IF{$\textsc{size}(Pos[Color])=1$}
        \STATE $LonelyPos.\textsc{delete}(CurrentPos)$
    \ELSIF{$CurrentPos$ is the first or second key in $Pos[Color]$}
        \STATE Delete the second position of $Pos[Color]$ from $SecondPos$
        \IF{$\textsc{size}(Pos[Color])\geq 3$}
            \STATE Insert the third position of $Pos[Color]$ to $SecondPos$
        \ELSIF{$\textsc{size}(Pos[Color])= 2$}
            \STATE Insert the other position of $Pos[Color]$ to $LonelyPos$
        \ENDIF
    \ENDIF
    \STATE $Pos[Color].\textsc{delete}(CurrentPos)$, $AllPos.\textsc{delete}(CurrentPos)$
    \STATE $Prevp \leftarrow $the pointer before $p$ in $L$, $Nextp\leftarrow $ the pointer after $p$ in $L$
    \STATE $L.\textsc{delete}(p)$
    \IF[if deleting $p$ makes the string non-reduced]{${}^*Prevp.\textsc{first} = {}^*Nextp.\textsc{first}$}
        \STATE $\textsc{Delete}(Nextp)$
    \ENDIF
\end{algorithmic}
\end{algorithm}

With this subroutine, it is easy to do an $a$-reduction for some $a$.

\begin{algorithm}[H]
\floatname{algorithm}{Procedure}
\caption{\textsc{Reduce}(Color $a$)}

\begin{algorithmic}[1]
    \STATE $DistMinPos\leftarrow$ the last position of $a$
    \WHILE{$Pos[a]$ is not empty}
        \STATE $p = Pos[a]$.\textsc{front}.\textsc{value}
        \STATE \textsc{Delete}(p)
    \ENDWHILE
\end{algorithmic}
\end{algorithm}

We now explain how to put the procedures and the results we have proved so far together to decide the foot-sortability of the input.
We will first focus on how to deal with the non-exceptional cases.
The detail for the exceptional cases will come later.

\begin{algorithm}[H]
\floatname{algorithm}{Procedure}
\caption{\textsc{Main}()}\label{procedure:main}

\begin{algorithmic}[1]
    \STATE \textsc{Preprocess()}
    \WHILE{$SecondPos$ is not empty}
        \STATE $(aSecPos, aSecPointer)\leftarrow SecondPos.\textsc{front}$
        \STATE $a\leftarrow {}^*aSecPointer.\textsc{first}$\COMMENT{$a$ has the earliest second appearance}
        \STATE $(aFirstPos, aFirstPointer)\leftarrow Pos[a].\textsc{front}$
        \STATE $(bPos,bPointer)\leftarrow\textup{first node in }LonelyPos\textup{ with key at least }DistMinPos$
        \STATE $b\leftarrow {}^*bPointer.\textsc{first}$\COMMENT{$b$ is the earliest minimal lonely color}
        \STATE $AllNum\leftarrow$ number of keys of $AllPos$ in the interval $(aFirstPos, aSecondPos)$
        \STATE $LonelyNum\leftarrow$ number of keys of $LonelyPos$ in the interval $(aFirstPos, aSecondPos)$
        \IF[if we are in Case 1]{$b$ exists \textbf{and} $bPos< aFirstPos$}
            \STATE $\textsc{Reduce}(b)$
        \ELSIF[if we are in Case 2]{$b$ does not exist \textbf{or} $bPos > aSecPos$}
            \IF[non-exceptional case in \cref{lemma:case2}]{$AllNum \neq 1$}
                \IF{$DistMinPos > aFirstPos$}
                    \STATE \textbf{return} \textbf{false}
                \ENDIF
                \STATE $\textsc{Reduce}(a)$
            \ELSE[exceptional case]
                \STATE $\textsc{Case2Exception}(a)$
                \COMMENT{this will be dealt with later}
            \ENDIF
        \ELSE[we must be in Case 3 here]
            \IF[\cref{corollary:case3}]{$DistMinPos > aFirstPos$ }
                \STATE $\textsc{Reduce}(b)$
            \ELSIF[\cref{lemma:B-step}]{ $bPointer$ is immediately after $aFirstPointer$ in $L$}
                \STATE $\textsc{Reduce}(b)$
            
            \ELSIF[non-exceptional case in \cref{lemma:A-step}]{$AllNum \neq LonelyNum+1$}
                \STATE $\textsc{Reduce}(a)$
            \ELSE[exceptional case]
                \STATE $\textsc{Case3Exception}(a)$
                \COMMENT{this will be dealt with later}
            \ENDIF
        \ENDIF
        \algstore{myalg}
\end{algorithmic}
\end{algorithm}

\begin{algorithm}[H]
\begin{algorithmic}[1]
\algrestore{myalg}
        \IF{$SecondPos.\textsc{front}.\textsc{key} \leq DistMinPos$}
            \STATE \textbf{return} \textbf{false}
            \COMMENT{in this case, the new simple partial order is not a partial order}
        \ENDIF
    \ENDWHILE
    \STATE \textbf{return} \textbf{true}
\end{algorithmic}
\end{algorithm}

The exceptional case in Case 2 is a little bit tricky to deal with.
The main difficulty lies in finding the first $x\neq a,z$ after the second $a$, as required in \cref{lemma:case2-exceptional}.
However, as we will do an $a$-reduction or a $z$-reduction at the end, if we see any $a$ or $z$ right after the second $a$, they are redundant anyways and can be removed immediately.
This would limit the run time when amortized.

\begin{algorithm}[H]\
\floatname{algorithm}{Procedure}
\caption{\textsc{Case2Exception}(Color $a$)}\label{procedure:Case2Exception}

\begin{algorithmic}[1]
    \STATE $aFirstPos\leftarrow$ first key in $Pos[a]$, $aSecPos\leftarrow$ second key in $Pos[a]$
    \STATE $z\leftarrow $ the color corresponds to the node of $Pos$ in $(aFirstPos,aSecPos)$
    \IF[$a$ is not minimal]{$ DistMinPos > aFirstPos $}
        \STATE $\textsc{Reduce}(z)$
    \ELSE
        \STATE ${}^*xPointer\leftarrow $the pointer to the node after the second $a$ in $L$
        \WHILE[false if $xPointer$ does not exist]{${}^*xPointer.\textsc{first}\in \{a,z\}$}
            \STATE $\textsc{Delete}(xPointer)$
            \STATE $xPointer\leftarrow $the pointer to the node after the second $a$ in $L$
        \ENDWHILE
        \STATE $xPos\leftarrow xPointer.\textsc{second}$ \COMMENT{the position of first $x\neq a,z$ after $aSecPos$}
        \STATE $aPos\leftarrow \textup{first key in }Pos[a]\textup{ after }xPos$, $zPos\leftarrow \textup{first key in }Pos[z]\textup{ after }xPos$
        \IF[(2) in \cref{lemma:case2-exceptional}]{$aPos$ does not exist}
            \STATE $\textsc{reduce}(a)$
        \ELSIF[(2) in \cref{lemma:case2-exceptional}]{$zPos$ does not exist}
            \STATE $\textsc{reduce}(z)$
        \ELSIF[(1) in \cref{lemma:case2-exceptional}]{$aPos<zPos$}
            \STATE$\textsc{reduce}(a)$
        \ELSE[(1) in \cref{lemma:case2-exceptional}]
            \STATE $\textsc{reduce}(z)$
        \ENDIF
    \ENDIF
\end{algorithmic}
\end{algorithm}

The exceptional case in Case 3 is easier to dealt with algorithmically.
We simply extract the necessary information needed for \cref{lemma:case3-exceptional} and branch accordingly.

\begin{algorithm}[H]\
\floatname{algorithm}{Procedure}
\caption{\textsc{Case3Exception}(Color $a$)}\label{procedure:Case3Exception}

\begin{algorithmic}[1]
    \STATE $aFirstPos\leftarrow$ first key in $Pos[a]$, $aSecPos\leftarrow$ second key in $Pos[a]$
    \STATE $z\leftarrow $ the color corresponds to the unique node of $AllPos$ with key right after $aFirstPos$
    \STATE $aLastPos\leftarrow \textup{Last key in }Pos[a]\textup{ after }aSecPos$, $zPos\leftarrow \textup{first key in }Pos[z]\textup{ after }aSecPos$
        \algstore{myalg}
\end{algorithmic}
\end{algorithm}

\begin{algorithm}[H]
\begin{algorithmic}[1]
\algrestore{myalg}
    \IF[(1) in \cref{lemma:case3-exceptional}]{$aLastPos$ exists and $zPos < aLastPos$}
        \STATE $\textsc{Reduce}(b)$
    \ELSE
        \STATE $xPointer\leftarrow $the pointer to the node after the second $a$ in $L$
        \IF[false if $xPointer$ does not exist]{${}^*xPointer.\textsc{first}=z$}
            \STATE $xPointer\leftarrow $the pointer to the node after $xPointer$ in $L$
        \ENDIF
        \STATE $xPos\leftarrow {}^*xPointer.\textsc{second}$ \COMMENT{the position of first $x\neq a,z$ after $aSecPos$}
        
        \IF[(2) in \cref{lemma:case3-exceptional}]{There is no key in $Pos[z]$ above $xPos$}
            \STATE $\textsc{reduce}(b)$
        \ELSE[(3) in \cref{lemma:case3-exceptional}]
            \STATE $\textsc{reduce}(a)$
        \ENDIF
    \ENDIF
\end{algorithmic}
\end{algorithm}

\subsubsection{Correctness}
In this subsection, we analyze the algorithm and show that it is correct.
It is clear, from the pseudocode, that after preprocessing, the desired properties of the data structures hold.
One can also verify that the $\textsc{Delete}$ procedure maintains all the properties and the reducedness of $L$, and when doing a reduction, the variable $DistMinPos$ is set to the right position as long as the first position of $a$ is at least $DistMinPos$, which should always be the case.
It is also clear that the $\textsc{Reduce}$ procedure removes all the nodes with the prescribed color.
It remains to verify the validity of $\textsc{Main}$.

We denote by $S$ the current reduced string and $\leq$ the simple partial order determined by $DistMinPos$.
We would insist that whenever we start the while loop in Line $2$, $\leq$ has to be an actual partial order.
First, we note that $SecondPos$ is empty if and only if there are only lonely letters, in which case $(S,\leq)$ is always foot-sortable.
Therefore we can return true immediately if we are outside the while loop.
Otherwise, we are in the while loop, and we will find a letter $c$ so that $(S,\leq)$ is $c$-decided, make sure that $c$ is still minimal with respect to $\leq$, and do a $c$-reduction.
Whether it fails or not is verified at the end, at Line 32.
Note that the only way the reduction fails is that the new $\leq$ fails to be a partial order, which is only possible if some letter has its second appearance at or before the distinguished minimal element as $S$ is reduced.
By the definition of $(S,\leq)$ being $c$-decided, this will ensure that the foot-sortability of $(S,\leq)$ has not been changed.

It is easy to verify that the variables from Line 4 to Line 7 are what they are supposed to be.
The variable $AllNum$ counts the number of letters between the first two $a$'s, and $LonelyNum$ counts the number of lonely letters between the first two $a$'s.
Line 10 brings us to Case 1, and so $(S,\leq)$ is $b$-decided by \cref{lemma:case1}.
Note that $b$ is minimal with respect to $\leq$ by definition, so we do not need to verify that here.

Line 12 leads to Case 2.
If Line 13 holds, then \cref{lemma:case2} shows that $(S,\leq)$ is $a$-decided.
We need to test if $a$ is still minimal with respect to $\leq$.
If not, $(S,\leq)$ cannot be foot-sortable so we can immediately return false.
Otherwise, we do the $a$-reduction.
The case where Line 13 does not hold is the exceptional case in \cref{lemma:case2}.
The procedure $\textsc{Case2Exception}$ is invoked in this case, and we move our focus there.
Here, we know that $z$ must be minimal as otherwise the substring $aza$ forces $a>z>a$, which contradicts the assumption that $\leq$ is a well-defined simple partial order.
If $a$ is not minimial, then we can immediately do a $z$-reduction as in Line 4.
Otherwise, we would like to find the first $x\neq a,z$ after the second $a$.
We delete the $a$'s and $z$'s along the way in Line 8, which is fine as they would have been deleted anyways by an $a$-reduction or $z$-reduction.
The validity of the rest is guaranteed by \cref{lemma:case2-exceptional}.

Lastly, if we reach Line 21 in the main procedure, then we must be in the remaining case, which is Case 3.
\cref{lemma:case3} shows that $(S,\leq)$ is $\{a,b\}$-decided, and so if $a$ is not minimal then we can immediately take a $b$-reduction as in Line 23.
\cref{lemma:B-step} also ensures that we can take a $b$-reduction as long as the $b$ appears right after the first $a$.
Now if $AllNum\neq LonelyNum+1$, that means that the number of unlonely letters between the first two $a$'s is not $1$.
Therefore \cref{lemma:A-step} tells us that we can take an $a$-reduction at Line 27, where we know that $a$ has to be minimal.
The remaining case is the exceptional case, and we move on to verify the validity of the procedure $\textsc{Case3Exception}$.
Note that in this case, the letter $z$ right after the first $a$ must be unlonely, and thus the unique unlonely letter between the first two $a$'s.
Moreover, there must be a second $z$ after the second $a$ by the definition of $a$ and the unloneliness of $z$.
Hence, $zPos$ must always exist.
\cref{lemma:case3-exceptional} shows that if the last $a$ after the second, if it exists, comes after $zPos$, then $(S,\leq)$ is $b$-decided.
Therefore Line 5 is valid.
If we get into Line 6, then the substring starting from the second $a$ in $S$ must be of the form $a$, $ax\cdots$ or $azx\cdots$ for some $x\neq a,z$.
Therefore $xPointer$ points to the node after $a$ that is not $a$ or $z$.
The validity of the rest of the procedure follows from \cref{lemma:case3-exceptional}.

\subsubsection{Runtime Analysis}
Lastly, we provide a brief runtime analysis of the algorithm. 
By using some standard self-balancing BST and some standard techniques, we can make sure that for the dictionaries, the following operations all take $O(\log N)$ time: insertion, deletion, binary-search with keys, and counting keys in an interval.
Moreover, appending and deleting any node from $L$ takes $O(1)$ time.
It is then clear that $\textsc{Preprocessing}$ takes $O(N\log N)$ time, and running $\textsc{Delete}$ once takes $O(\log N)$ time, aside from the recursive calls.
Note that $\textsc{Delete}$ can be run for at most $N$ times, so the total time spent in $\textsc{Delete}$ is also $O(N\log N)$.
From now on, we can safely disregard the time cost of any call of $\textsc{Delete}$.
Using the same argument, we see that the total time spent in $\textsc{Reduce}$ (aside from $\textsc{Delete}$) is $O(N)$, and we also disregard the cost of $\textsc{Reduce}$ from now on.
Lastly, for the main procedure, most operations in the while loop are basic operations or standard operations for the dictionaries, and they take $O(\log N)$ time.
The only exception is the while loop in $\textsc{Case2Exception}$.
However, the cost of that while loop can be bounded, up to a constant, by the cost of $\textsc{Delete}$.
Therefore the amortized run time of a single while loop of the main procedure is $O(\log N)$.
This shows that the total run time is $O(N\log N)$, as desired.

With all the results we have shown, we have proved \cref{theorem:algorithm}.
\section{Minimal 2-bounded non-foot-sortable sock orderings}\label{section:min-2-bdd}
In this section, we list all the minimal non-foot-sortable sock orderings that are 2-bounded.
Intuitively, this is done by running the algorithm and see what are the relevant letters that fail the algorithm.
In some sense, we are looking for a potentially smaller subsequence that serves as a certificate that certifies the unsortability.
This idea can in principle be done for general sock orderings.
However, an exhaustive discussion would be too lengthy and complicated, and there are already several important ideas that are needed to deal with the 2-bounded sock orderings.
Therefore, we will only focus on 2-bounded sock orderings in this section.

In the following two subsections, we will first list some minimal non-foot-sortable sock orderings and verify that they are indeed minimal non-foot-sortable.
We will then show that they are indeed all such sock orderings by examining the algorithm and enumerate all possible ways in which the algorithm outputs false.

For our convenience, we will call the minimal 2-bounded non-foot-sortable sock orderings \emph{critical sock orderings}.

\subsection{The list of minimal 2-bounded non-foot-sortable sock orderings}

We begin by listing several sporadic critical sock orderings.
We will split them into four types.
How this classification is done will be clear later on.

\begin{lemma}[Sporadic critical sock orderings]\label{lemma:sporadic}
    The following $14$ sock orderings are all critical.
    \begin{itemize}
        \item[\textup{Type I.}] $abcdbacd,\, abcdedabc$;
        \item[\textup{Type I'.}] $abcadbdc,\, abcbdadc,\, abcdbadc,\, abcdcadb,\, abcdceaeb,\, abcdedacb$;
        \item[\textup{Type II.}] $abcdbcad,\, abcdcbad,\, abcdedbac$;
        \item[\textup{Type III.}] $abcabdedc,\, abcdadedc,\, abcdcaefeb$.
    \end{itemize}
\end{lemma}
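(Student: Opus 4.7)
The plan is to verify both non-foot-sortability and minimality for each of the $14$ listed sock orderings directly from the criterion in \cref{theorem:criterion}. For each ordering $S$, the task splits into two finite checks: showing that no total order $\leq_{\ell}$ on the alphabet of $S$ certifies foot-sortability of $S$, and, for every position $i$, exhibiting a total order that certifies foot-sortability of the ordering obtained by deleting $s_i$. Since each listed ordering has length at most $10$ and uses at most $6$ distinct letters, the search space is small and the entire lemma reduces to a bounded verification.

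For the non-foot-sortability direction, rather than enumerating all $|\Gamma|!$ total orders, I would argue via the minimum element of a hypothetical witness $\leq_{\ell}$. If $m$ is the $\leq_{\ell}$-minimum, then \cref{theorem:criterion} forces $y\geq_{\ell}z$ whenever $y,z,m$ appears as a subsequence of $S$ with three distinct letters. Case-splitting on the identity of $m$ within each ordering, these forced inequalities either immediately yield a cycle (contradicting totality) or combine with the remaining instances of the criterion to produce a forbidden pattern $b,c,a$ with $a<_{\ell}b<_{\ell}c$. The four-type grouping in the statement likely reflects the structural reason for non-foot-sortability, so within a type the same argument template applies and it should suffice to write out one representative in detail and note the parallel cases.

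For minimality, for each of the at most $10$ positions in each $S$, I would provide an explicit total order on the remaining alphabet and verify directly that no forbidden pattern appears. Since the reduced orderings have length at most $9$, one only needs to inspect a bounded number of length-three subsequences, and the witnesses can be tabulated compactly with one row per critical ordering and one column per deleted position.

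The main obstacle is organizational rather than mathematical: the lemma is essentially a finite verification of roughly $150$ foot-sortability instances, each of which could alternatively be resolved mechanically by the algorithm of \cref{theorem:algorithm}. The challenge is presenting the argument in a form that is both convincing and readable without degenerating into an exhaustive enumeration. Grouping by type, writing out one non-foot-sortability proof per type, and presenting the minimality witnesses in compact tabular form should keep the exposition within reasonable length while leaving the remaining cases as routine variants.
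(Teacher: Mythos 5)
Your proposal is correct and is essentially the paper's approach: the paper also treats this as a finite verification, establishing non-foot-sortability by running its algorithm on each ordering (the algorithm's reduction steps are exactly a systematized version of your minimum-element case analysis on the criterion of \cref{theorem:criterion}) and establishing minimality by checking each single-sock deletion for sortability. The only difference is minor: for the minimality half the paper also points out one may instead invoke the later exhaustiveness results, since no listed ordering properly contains another listed pattern, whereas you supply explicit certifying total orders for each deletion.
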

\begin{proof}
    To see that they are all non-foot-sortable, we can simply run the algorithm on them.
    Take the first sock ordering $abcdbacd$ for example, $b$ has the earliest second appearance, and there are no lonely letters, showing that we are in Case 2.
    It is also the non-exceptional case, so it is $b$-decided.
    After a $b$-reduction, we get $acdacd$ with $a>c>d$.
    Now $a$ has the earliest second appearance, and we are still in the non-exceptional case in Case 2.
    However, $a$ is not minimal, so the algorithm outputs false here.
    Similar arguments can be done for all other sock orderings listed here.

    To show that they are minimal, one can enumerate through all socks, delete them from the sock ordering, and run the algorithm to verify that it becomes foot-sortable.
    Alternatively, we will later show that non-foot-sortable sock orderings must contain some sock ordering listed in this subsection as a subsequence.
    Therefore, once the list is complete, we can verify easily that no proper subsequence of them appears in the list.
\end{proof}

Aside from the $14$ sporadic sock orderings that are critical, there are four infinite families of critical sock orderings.
We list them below and show that they are all critical.

\begin{lemma}[Four families of critical sock orderings]\label{lemma:infinite}
The following four families all consist of critical sock orderings.
\begin{itemize}
    \item[\textup{Type A.}]$xa_0ya_{n-1}xa_{n-2}a_{n-1}a_{n-3}a_{n-2}\cdots a_0a_1$ where $n\geq 2$;
    \item[\textup{Type B.}]$a_0xya_{n-1}xya_{n-2}a_{n-1}a_{n-3}a_{n-2}\cdots a_0a_1$ where $n\geq 3$;
    \item[\textup{Type B'.}]$a_0yxa_{n-1}xya_{n-2}a_{n-1}a_{n-3}a_{n-2}\cdots a_0a_1$ where $n\geq 3$;
    \item[\textup{Type C.}]$a_0xa_{n-1}yzyxa_{n-2}a_{n-1}a_{n-3}a_{n-2}\cdots a_0a_1$ where $n\geq 3$.
\end{itemize}
    
\end{lemma}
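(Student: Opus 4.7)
For each of the four families we must verify (i) that every listed string is non-foot-sortable, and (ii) that deleting any single sock yields a foot-sortable string.

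For (i), the plan is to trace the algorithm of \cref{theorem:algorithm} on a general member of each family and observe that it returns \textbf{false}. The four families are engineered so that the sequence of reductions the algorithm takes is completely forced by the case analysis in \cref{section:algorithm}. For Type A, the lonely letter $y$ sits between the two $x$'s flanked by two unlonely letters $a_0$ and $a_{n-1}$, putting us in the non-exceptional branch of Case 3; the forced $x$-reduction installs the relations $a_0 \geq_\ell y \geq_\ell a_{n-1}$ in the simple partial order. The common suffix $a_{n-2}a_{n-1}a_{n-3}a_{n-2}\cdots a_0 a_1$ then repeatedly triggers the exceptional branch of Case 2 (\cref{lemma:case2-exceptional}), forcing the reductions $a_{n-1}, a_{n-2}, \ldots, a_2$ in sequence, each one adding constraints $a_0, y \geq_\ell a_i$. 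After the $a_2$-reduction, the residual string is $a_0 y a_1 a_0 a_1$ and $a_0$ becomes the next candidate for reduction, but the accumulated constraints make $a_0$ non-minimal in the simple partial order, so the minimality check in Case 2 of \cref{procedure:main} triggers and the algorithm returns \textbf{false}. Types B and B' reach the same intermediate string after different initial steps (Case 2 non-exceptional and Case 2 exceptional, respectively, since they have no lonely letters at the outset), and then follow the same chain to failure. Type C instead begins with a forced B-step $z$-reduction (Case 3, \cref{lemma:B-step}), which installs enough constraints on $x$ that the subsequent $x$-reduction attempt fails the minimality check in Case 2 immediately. Iteratively applying \cref{lemma:reduction} concludes that each original string is non-foot-sortable.

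For (ii), the plan is to exhibit, for each deletion, an explicit total order $\leq_\ell$ certifying foot-sortability via \cref{theorem:criterion}. The prototypical witness is a monotone order on the $a_i$'s supplemented by a suitable placement of $x, y, z$; deleting a sock from the suffix breaks the forced chain of $a_i$-reductions in (i) and allows such a monotone order to succeed, while deleting a sock from the prefix breaks one of the seed constraints like $a_0 \geq_\ell y$ or $y \geq_\ell a_{n-1}$ in Type A and affords an alternative order. Because the obstructions in (i) involve only a bounded window of letters at the prefix–suffix interface plus a uniform recursive step through the suffix, each minimality verification reduces to checking a bounded number of forbidden triples, independent of $n$.

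The main obstacle is managing the $\Theta(n)$ deletion positions per family uniformly in $n$. This is addressed by the locality of the obstruction: the non-foot-sortability argument generalizes via the recursive suffix structure, and the minimality argument generalizes because the witness orders differ only locally between adjacent deletion positions. Consequently, only a constant number of essentially distinct cases per family need explicit checking, with the remaining deletions following uniformly from this finite list together with the recursive suffix structure.
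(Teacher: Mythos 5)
Your overall approach for non-foot-sortability is the same as the paper's (trace the forced reductions of the algorithm and conclude by iterating \cref{lemma:reduction}), and your identification of the first steps for Types A, B, B' is correct. However, your Type C trace is wrong. After the forced $z$-reduction via \cref{lemma:B-step}, the string is $a_0xa_{n-1}yxa_{n-2}a_{n-1}\cdots a_0a_1$ with $a_0>x>a_{n-1}>y$, and the next iteration is \emph{not} a failed $x$-reduction in Case 2: the letter $y$ is now lonely and is the distinguished minimal letter, and it lies between the first two $x$'s, so we are in Case 3 and \cref{corollary:case3} forces a $y$-reduction, which succeeds. There is no immediate failure; the algorithm only returns false after the $y$-reduction produces $a_0xa_{n-1}xa_{n-2}a_{n-1}\cdots a_0a_1$ with $a_0>x>a_{n-1}$ and the whole chain of $a_{n-1},\ldots,a_2$-reductions (as in Type A) runs its course — equivalently, as the paper argues, this intermediate string contains the already-refuted Type A/B intermediate with the same order constraints. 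So the sentence ``fails the minimality check in Case 2 immediately'' is false as stated and the Type C argument has to be replaced by this longer chain or by the containment argument.

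The bigger gap is part (ii). Minimality is half of the lemma, and your proposal only gestures at it: no witness orders are given, and the claim that ``only a constant number of essentially distinct cases per family need explicit checking'' is asserted, not proved. The paper's actual content here is a concrete family of certificates organized by grouping consecutive socks into pairs, one total order per group, checked against \cref{theorem:criterion}; e.g.\ for Type A the grouping $(xa_0)(y)(a_{n-1}x)(a_{n-2}a_{n-1})\cdots(a_0a_1)$ with the order $y<a_{n-1}<\cdots<a_{i+1}<x<a_i<\cdots<a_0$ when a sock of the pair $(a_ia_{i+1})$ is deleted — note the position of $x$ inside the chain of $a_j$'s depends on $i$, which is exactly the ``suitable placement of $x$'' you would need to specify and verify. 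Your locality argument does not substitute for this: the certificates involve all $n+O(1)$ letters and the forbidden-pattern condition of \cref{theorem:criterion} is global over the string, so ``a bounded number of forbidden triples independent of $n$'' is precisely what must be demonstrated, not assumed. As written, the minimality of all four families (roughly $2n$ deletion positions each) remains unproven.
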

\begin{proof}
    We begin by showing that any type-A sock ordering is not foot-sortable.
    By \cref{lemma:case2}, the sock ordering $xa_0ya_{n-1}xa_{n-2}a_{n-1}a_{n-3}a_{n-2}\cdots a_0a_1$ is $x$-decided.
    After an $x$-reduction, we get $a_0ya_{n-1}a_{n-2}a_{n-1}a_{n-3}a_{n-2}\cdots a_0a_1$ with $a_0>y>a_{n-1}$.
    This is the exceptional case of \cref{lemma:case2}, and by \cref{lemma:case2-exceptional} we see that it is $a_{n-1}$-decided.
    After doing an $a_{n-1}$-reduction, we get the string $a_0ya_{n-2}a_{n-3}a_{n-2}a_{n-4}a_{n-3}\cdots a_0a_1$ with $a_0>y>a_{n-2}$.
    By a simple induction, we can show that eventually we reduce the problem to $a_0ya_1a_0a_1$ with $a_0>y>a_1$.
    By \cref{lemma:case2}, $(a_0ya_1a_0a_1,\leq)$ is $a_0$-decided, which shows that it is not foot-sortable.
    Therefore the original sock ordering is also not foot-sortable.

    To show that any type-A sock ordering is critical, it remains to show that it becomes foot-sortable upon deletion of any sock.
    Consider the following grouping: $(xa_0)(y)(a_{n-1} x)(a_{n-2}a_{n-1})\cdots (a_0a_1)$.
    For each group, we specify a total order that certifies the foot-sortability when one of the element in the group is deleted.
    With the total order, one can then verify the foot-sortability by \cref{theorem:criterion}.
    For the group $(xa_0)$, the total order $y<x<a_{n-1}<\cdots <a_0$ serves as a certificate.
    For the singleton $(y)$, the total order $x<a_{n-1}<\cdots < a_0$ does the job.
    For $(a_{n-1} x)$, we can take $x<y<a_{n-1}<\cdots <a_0$.
    Finally, for the group $(a_ia_{i+1})$, the total order $y<a_{n-1}<\cdots <a_{i+1}<x<a_i<\cdots < a_0$ certifies the foot-sortability.

    The proofs for the other three families follows the same vein.
    For any type-B sock ordering, by \cref{lemma:case2} it must be $x$-decided.
    After the $x$-reduction, the sock ordering becomes $a_0ya_{n-1}ya_{n-2}a_{n-1}\cdots a_0a_1$ with $a_0>y>a_{n-1}$.
    This is not foot-sortable even without the second $y$, so the original type-B sock ordering is also not foot-sortable.
    The same argument works for type-B' sock orderings.

    For both type-B and type-B' sock orderings, we can also group $(a_{n-2}a_{n-1})(a_{n-3}a_{n-2})\cdots (a_0a_1)$.
    Whenever a sock is removed from the group $(a_ia_{i+1})$, we can show the foot-sortability by the total order $x<a_{n-1}<\cdots <a_{i+1}<y<a_i<\cdots <a_0$.
    If the first $a_0$ is deleted, then $x<a_{n-1}<\cdots<a_0<y$ works.
    Lastly, $x<y<a_{n-1}<\cdots <a_0$ takes care of the case when the first $y$ or the second $x$ is deleted, and $y<x<a_{n-1}<\cdots <a_0$ takes care of the case when the second $y$, the first $x$ or the first $a_{n-1}$ is deleted.

    The argument for type-C sock orderings is slightly more complicated.
    First, it is $z$-decided by \cref{lemma:B-step}, and it is $y$-decided by \cref{corollary:case3} after $z$-reduction.
    Once we take $y$-reduction, the instance becomes $a_0xa_{n-1}xa_{n-2}a_{n-1}\dots a_0a_1$ with $a_0>x>a_{n-1}$, which is not foot-sortable as shown above (by relabeling $x$ as $y$).
    To show that it is critical, we once again consider the grouping $(a_0)(xa_{n-1})(yz)(yx)(a_{n-2}a_{n-1})\cdots (a_0a_1)$.
    When $(a_0)$ is omitted, we can take $z<y<a_{n-1}<\cdots <a_0<x$.
    If some element in $(xa_{n-1})$ is missing, the total order $z<y<x<a_{n-1}<\cdots <a_0$ certifies the foot-sortability.
    For the group $(yz)$, the total order $x<y<z<a_{n-1}<\cdots <a_0$ works.
    For the next group $(yx)$, we may take $x<z<y<a_{n-1}<\cdots <a_0$ to show foot-sortability.
    Lastly, if some sock in $(a_ia_{i+1})$ is deleted, then we can take $z<y<a_{n-1}<a_{n-2}<\cdots <a_{i+1}<x<a_i<\cdots <a_0$.
    This shows that any type-C sock ordering is critical.
\end{proof}

Note that this already shows that once we remove the alignment-free assumption imposed in \cite{DK22}, we get infinitely many subsequences to avoid for foot-sortable sock orderings, which answers Question 5.1 Defant and Kravitz ask in \cite{DK22}.
This is really different from the alignment-free case, where Defant and Kravitz show that the alignment-free foot-sortable two-uniform sock orderings can be described as the ones avoiding a finite list of subsequences \cite{DK22}.

\subsection{The list is exhaustive}
In this subsection, we investigate all the places the main procedure \cref{procedure:main} would be forced to output false.
This would allow us to enumerate all critical sock orderings.
We start with an intermediate result that justifies the classification in \cref{lemma:sporadic}.

\begin{lemma}\label{lemma:intermediate}
    Suppose that the algorithm \cref{procedure:main} runs on some 2-bounded input and outputs false, then at some stage the string contains one of the following patterns as a subsequence.
    \begin{enumerate}
        \item[\textup{Type I.}] $apqapq$ with $a>p$;
        \item[\textup{Type I'.}] $apqaqp$ with $a>p$;
        \item[\textup{Type II.}] $xapaxp$ with $x>a>p$;
        \item[\textup{Type III.}] $apaxyxp$ with $a>p$;
        \item[\textup{Type IV.}] $aqpaq$ with $a>q>p$ and $q$ lonely.
    \end{enumerate}
\end{lemma}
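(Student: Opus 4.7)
The plan is to perform a case analysis on every point at which \cref{procedure:main} (together with its two exceptional sub-procedures) can return \textbf{false}, and at each such point to extract from the current string $L$ a subsequence matching one of the five types. Inspection of the pseudocode shows that \textbf{false} is returned in exactly two places in \textsc{Main}: the check at Line 15 inside the Case 2 non-exceptional branch (which fires when the chosen $a$ is not minimal), and the consistency check at Line 32 after any successful reduction (which fires when some letter has its second occurrence at or before the new $DistMinPos$). The exceptional procedures \textsc{Case2Exception} and \textsc{Case3Exception} always terminate via a call to \textsc{Reduce}, so they funnel back into Line 32. The two structural tools I will rely on are: (a) the partial order $\leq$ is always simple, so $x > y$ holds exactly when $x$ precedes $y$ in the defining prefix of $L$ ending just before $DistMinPos$; and (b) whenever a reduction by $r$ is performed, any new relation $x > y$ it introduces is witnessed by an explicit subsequence $x, y, r$ in the pre-reduction string. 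Together these facts let me translate the partial-order comparisons demanded by the five target patterns into concrete subsequences that appear at some actual stage of the execution.

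For a Line 15 failure I work directly with the current $L$. Since $aFirstPos < DistMinPos < aSecPos$, the distinguished minimum $d$ lies strictly between the two occurrences of $a$ and satisfies $a > d$; the condition $AllNum \ge 2$ produces a second letter $e$ between the two $a$'s, and the Case 2 hypothesis forces $e$ to be either unlonely or lonely-not-minimal. The subsequence consisting of the two $a$'s, $d$, $e$, and the second occurrences of $d$ and $e$ past $aSecPos$ (whose existence is guaranteed by the ``earliest second occurrence'' property of $a$) then yields Type I or I' when both $d$ and $e$ are unlonely, and Type II or IV in the remaining sub-cases, where 2-boundedness keeps the patterns short.

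For a Line 32 failure I use tool (b). The offending letter $a'$ occurs twice before the new $DistMinPos$, one of those occurrences being part of a subsequence $a', ?, r$ forced by the just-performed reduction. Splicing this witness together with the structural letters of whichever branch chose $r$---the minimal lonely $b$ in Case 1, the earliest-second unlonely $a$ in Case 2 non-exceptional, the pair $(a, z)$ in Case 2-exceptional, the pair $(a, b)$ in Case 3, and the triple $(a, b, z)$ in Case 3-exceptional---produces Type III for the Case 3 branches that retain an auxiliary lonely letter, Type IV when a single lonely letter is flanked by repeated ones, and Types I' or II for the simpler branches.

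The main obstacle I anticipate is not any single subtle argument but the bookkeeping required to cover every branch without omission. When a Line 32 failure follows a chain of reductions---for instance the successive B-steps analyzed inside the proof of \cref{lemma:A-step}---the witness subsequence must incorporate letters that were removed several reductions earlier, so I must verify that the corresponding pattern already appears in $L$ at some genuine earlier stage of the algorithm. Ensuring that every branch terminates with a pattern from the list, with no cases slipping through the cracks and with the patterns remaining valid (distinct letters, correct order constraints), is the most error-prone part of the plan and will require methodical enumeration rather than ingenuity.
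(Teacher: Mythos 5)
Your identification of the two places where \textsc{Main} can output false (the check at Line 15 and the partial-order check at Line 32) and the idea of reading a witness subsequence off the current string at the moment of failure do match the paper's strategy, but your plan for Line 32 has a genuine gap. The heart of the paper's argument --- stressed in the remark right after the lemma --- is that most branches can never trigger Line 32 at all: a $b$-reduction sets $DistMinPos$ to the position of a lonely letter lying before the second $a$, hence strictly before every second occurrence, and an $a$-reduction performed when $a$ is minimal sets $DistMinPos$ to the last $a$, which by 2-boundedness is the second $a$, i.e.\ the earliest second occurrence; in both situations no remaining letter can have its second occurrence at or before the new $DistMinPos$, so the Line 32 test cannot fire (this is exactly where 2-boundedness is used, and also why, in \cref{procedure:Case2Exception} with $a$ minimal, the algorithm always ends up reducing by $a$). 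Consequently the only Line 32 failure comes from the $z$-reduction in \cref{procedure:Case2Exception} when $a$ is not minimal, and that single situation is what produces Types II and III, using that $z$ is the unique letter between the two $a$'s, that $a>z$, and that 2-boundedness confines the offending letter's two occurrences to the placements $xazaxz$ or $azaxyxz$. Your proposal instead plans to extract patterns from Line 32 failures in Case 1, Case 2 non-exceptional, Case 3 and Case 3-exceptional and assigns them Types I$'$, II, III, IV; those failures do not exist, and what is actually needed there is the impossibility argument above rather than a splicing construction. Relatedly, the branch-to-type bookkeeping you give is off even where failures do occur: Type II cannot be guaranteed at Line 15 (it needs a letter preceding the first $a$ with $x>a$, which the Line 15 data never supplies), while the ``lonely'' sub-case at Line 15 yields Type IV --- and there the lonely letter has no second occurrence, so your plan of using ``the second occurrences of $d$ and $e$'' does not apply to it.

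A secondary point: your anticipated difficulty with chains of B-steps, where the witness would have to incorporate letters deleted several reductions earlier, is not an actual feature of the problem. The lemma only asserts a pattern in the string \emph{at some stage}, and the correct analysis is entirely local to the iteration at which false is returned: the pattern is read off the current (already reduced) string just before the failing step, with the inequalities $a>p$, $x>a>p$, $a>q>p$ supplied by the current simple order via $DistMinPos$. No cross-iteration splicing is needed, and planning for it suggests the decisive structural facts above have not yet been isolated.
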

\begin{proof}
    We examine one by one where we make decisions in \cref{procedure:main} and see if any of them possibly reaches a contradiction and outputs false.
    The first decision is made in Case 1 (Line 11), which always gives no immediate contradiction.
    For Case 2, if $a$ is not minimal and we are not in the exceptional case, then we immediately output false as in Line 15.
    The fact that we are not in the exceptional case shows that there are two unlonely letters $p,q$ between the first two $a$'s, or there is a lonely letter $q$ and an unlonely letter $p$, and we are in Case 2 as $q>p$.
    We first deal with the case where there are two unlonely letters.
    Let $p$ be the first one and $q$ be the second one.
    Then we must have a subsequence of $apqapq$ or $apqaqp$, and we must have $a>p$ as $a$ is not minimal and we are in Case 2.
    This shows that a type-I or type-I' pattern exists.
    Now if there is a lonely letter $q$, an unlonely letter $p$ with $q>p$, then we get a subsequence $aqpap$ with $a>q>p$.
    This gives a type-IV pattern.
    This deals with the possible contradictions reached by Line 15.
    Another possible contradiction is that $a$ is minimal, but $\leq$ fails to be a partial order after $a$-reduction.
    However, as there are at most two $a$'s and $a$ has the earliest second appearance, this is not possible.

    In the exceptional case, we move our focus to \cref{procedure:Case2Exception}.
    The first kind of decision is that $a$ is not minimal, and we are forced to do a $z$-reduction as in Line 4.
    If after $z$-reduction and reducing the string, there are still some $x$ with its second appearance before the position of the last (i.e. the second) $z$, then we reach a contradiction.
    As the sock ordering is 2-bounded and $z$ is the only letter between the two $a$'s, this is only possible with the subsequence $xazaxz\, (x>a>z)$ or $azaxyxz\, a>z$.
    Relabeling $z$ as $p$, we get the type-II or type-III pattern.
    The remaining case is if $a$ is minimal.
    Note that since the sock ordering is two-bouonded, there cannot be anymore $a$ after the first $x\neq a,z$ after the second $a$, so we always reduce with $a$ at Line 14.
    Since we are reducing with $a$, as before, this never gives an immediate contradiction.

    For Case 3, similar to Case 1, we never get an immediate contradiction if we reduce with $b$.
    Also similar to Case 2, as the sock-ordering is 2-bounded, we never get an immediate contradiction if $a$ is minimal and we reduce with $a$.
    Therefore we never get an immediate contradiction in Case 3.
    This concludes the proof.
\end{proof}
\begin{remark}
    We use the fact crucially for several times that if the letter $a$ with the earliest second appearance is minimal and we reduce with $a$, then we never reach an immediate contradiction.
    This relies on the assumption that the sock ordering is 2-bounded.
    If we remove this assumption, then we also need to take care of other types of patterns, such as $apqapqxyxa$ with $a>p$.
    We will later see some other minimal non-foot-sortable sock ordering that is not 2-bounded.
    It seems that there would be many more case analyses needed once the 2-bounded assumption is lifted.
\end{remark}

It remains to see where the inequality conditions come from in different types of patterns.
To do so, we need the following lemma.

\begin{lemma}\label{lemma:no-contradiction}
    Run the algorithm \cref{procedure:main} on some input that is 2-bounded.
    Suppose that at some point, $a$ is the letter with the earliest second appearance, and $a$ is not larger than any unlonely letter.
    Then the algorithm will not output false as long as $a$ is still the letter with the earliest second appearance.
\end{lemma}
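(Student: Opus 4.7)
The plan is to argue by contradiction, leveraging \cref{lemma:intermediate}. Suppose the algorithm outputs false at some iteration while $a$ is still the letter with the earliest second appearance. Then by \cref{lemma:intermediate}, the current string at the stage of output false contains one of the patterns Type~I through Type~IV as a subsequence, with the stated inequalities holding in the simple partial order at that stage. Because $a$ persists as the letter with the earliest second appearance throughout the period, $a$ plays the role of the letter ``$a$'' in the matched pattern.

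The first step is the observation that each of the five pattern types forces $a > p$ in the current partial order for some unlonely letter $p$. Indeed, in Types~I, I', II, and III, the letter $p$ (or $z$ in Types~II and III, after relabeling) appears twice in the pattern, hence is unlonely in the current string, and the inequality $a > p$ (or $a > z$) is stated directly. In Type~IV, $p$ appears twice and $a > p$ follows by transitivity from $a > q > p$. It therefore remains to show that the hypothesis ``$a$ is not greater than any unlonely letter'' propagates from the start of the period to the stage of output false; this will contradict the previous paragraph and complete the proof.

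A convenient reformulation exploits the fact that the Line~32 check keeps $\text{DistMinPos} < \text{aSecPos}$ at the start of every iteration and that the second appearance of every unlonely letter lies beyond aSecPos (since $a$ has the earliest second). Consequently $a > p$ in the simple partial order is equivalent to $p$'s first appearance lying in the interval $(\text{aFirstPos},\text{DistMinPos}]$, and the task reduces to showing that no unlonely letter acquires its first appearance in this interval during the period. Case~1 reductions trivially push DistMinPos below aFirstPos. In the Case~2 exceptional $z$-reduction, the hypothesis forces $z$ to be lonely (otherwise $a > z$ already holds when $a$ is not minimal, violating the hypothesis), and the subsequent collapse removes the second appearance of $a$ and ends the period before any false output.

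The main anticipated obstacle is the Case~3 $b$-step, possibly iterated, where DistMinPos moves rightward to $\text{bPos}\in(\text{aFirstPos},\text{aSecPos})$. Here I would use the defining property of $b$ as the earliest minimal lonely letter at or past DistMinPos to exclude unlonely first appearances from the newly exposed segment, and then leverage the 2-bounded assumption --- through the fact that deletion-induced collapses can only demote unlonely letters to lonely ones, never the reverse --- to propagate the invariant across any chain of successive $b$-reductions, until the chain terminates either in an $a$-reduction (which ends the period) or in a Case~2 iteration whose exceptional branch is again controlled by the argument above.
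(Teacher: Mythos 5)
Your framework (the patterns of \cref{lemma:intermediate} all force $a$ to exceed some unlonely letter, so it suffices to propagate the invariant ``$a$ is not above any unlonely letter'' across the reductions that keep $a$ as the letter with the earliest second appearance) is the same as the paper's, and your handling of Case 1, of an $a$-reduction ending the period, and of the Case 2 exceptional branch when the invariant holds is essentially right. The gap is in your ``main obstacle'' paragraph, and it is twofold. First, the mechanism you invoke is backwards: the defining property of $b$ (earliest \emph{minimal lonely} letter at or after $DistMinPos$) excludes lonely letters, not unlonely ones, from the newly exposed stretch. Indeed, any letter whose occurrences lie strictly between the old $DistMinPos$ and $b$ is automatically minimal, so it would contradict the choice of $b$ if it were lonely; hence every letter strictly between the old $DistMinPos$ and $b$ is \emph{unlonely}. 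The ordinary Case 3 $b$-reductions preserve the invariant only because of their specific triggering conditions (Line 23: $a$ not minimal forces the distinguished minimal letter to be lonely, hence equal to $b$, so $DistMinPos$ does not advance past anything new; Line 25: $b$ is immediately after the first $a$, so the exposed stretch is empty), not because of any general exclusion of unlonely first appearances.

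Second, and more seriously, in the exceptional Case 3 branch (\cref{procedure:Case3Exception}, branch (2) of \cref{lemma:case3-exceptional}) the invariant genuinely breaks: the $b$-reduction moves $DistMinPos$ past the unlonely letter $z$ sitting right after the first $a$, so $a>z$ with $z$ unlonely. Your fallback---that the ensuing Case 2 exceptional iteration is ``controlled by the argument above''---does not apply there, because that argument hinged on $z$ being lonely, whereas here $z$ is unlonely by construction (it is the unique unlonely letter between the two $a$'s). So your induction cannot be closed as written. The paper closes exactly this case by a different argument: after such a $b$-step the algorithm is forced through B-steps into the exceptional Case 2 branch and then performs a $z$-reduction; by 2-boundedness and the branch-(2) condition (there is no $z$ after the first letter $x\neq a,z$ following the second $a$), no unlonely letter has its second appearance at or before the last $z$, so the Line 32 check cannot fire, and the $z$-reduction merges the two $a$'s, so $a$ stops being the letter with the earliest second appearance without any false output. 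Some argument of this kind is needed where you currently appeal to the invariant.
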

\begin{proof}
    Note that in all types of patterns listed in \cref{lemma:intermediate}, $a$ is always greater than some unlonely letter.
    Therefore when we start, we do not get an immediate contradiction.
    To verify whether this holds true in later iterations, we just need to examine whether this is true whenever we take a $b$-reduction, as taking an $a$-reduction or a $z$-reduction breaks the property of $a$.
    We will again refer to lines of \cref{procedure:main}.
    Note that the $b$-reductions in Case 1 (Line 11) and Line 25 in Case 3 maintain the property that $a$ is not greater than some unlonely letter.
    If we are forced to take a $b$-reduction because $a$ is not minimal as in Line 23, then the distinguished minimal element must be lonely, and thus it must be $b$.
    This shows that taking a $b$-reduction still does not make $a$ larger than any unlonely letter.
    Lastly, if we take a $b$-reduction in \cref{procedure:Case3Exception}, then as shown in \cref{lemma:case3-exceptional}, eventually we will get back to the exceptional case of Case 2 and take a $z$-reduction.
    As shown in \cref{lemma:intermediate}, this never gives an immediate contradiction.
    Therefore we would also not get a contradiction until $a$ is no longer the letter with the earliest second appearance.
\end{proof}

The contrapositive of \cref{lemma:no-contradiction} implies that in order for one of the patterns listed in \cref{lemma:intermediate} to exist, the letter $a$ must be larger than some unlonely letter as soon as $a$ becomes the letter with the earliest second appearance.
Note that in all cases, we can take the unlonely letter to be $p$ by taking $p$ to be the first unlonely letter after $a$.
Now we begin by dealing with the first three types in \cref{lemma:intermediate}, which would give rise to the list of sporadic critical sock orderings.
In the following, we will disregard all subsequences that have the type-A sequence $xa_0ya_1xa_0a_1$, as this would be included in the four infinite families anyways.

\begin{lemma}\label{lemma:sporadic-complete}
    Suppose we have a 2-bounded sock ordering that avoids the pattern $xa_0ya_1xa_0a_1$, and when the algorithm \cref{procedure:main} runs, at some point we find some pattern listed in \cref{lemma:intermediate} that is not of type IV.
    Then the original input contains one of the critical sock ordering listed in \cref{lemma:sporadic}.
\end{lemma}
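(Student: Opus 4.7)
The plan is to case-analyze by the type of pattern found at the stage when \cref{procedure:main} is forced to return false, and to lift that pattern from the current string $T$ back to a subsequence of the original input $S_0$ by incorporating the most recently reduced color (plus, in some sub-cases, one additional reduced letter) that witnesses the strict inequalities appearing in the pattern.

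First, I would fix the iteration at which the type-I, I', II, or III pattern appears. Let $T$ denote the current string at that stage and $\leq$ the associated simple partial order. By simplicity, $\leq$ is determined by the prefix of $S_0$ up to $DistMinPos$, which equals the position of the last occurrence in $S_0$ of the most recently reduced color $r$. Every strict inequality in the pattern (e.g.\ $a>p$ for type I, or the chain $x>a>p$ for type II) is therefore witnessed by the presence of the involved letters within this prefix, which in particular forces $r$'s last occurrence in $S_0$ to sit at or past the first occurrence of the smallest letter involved. Because $S_0$ is 2-bounded, the pattern uses all occurrences of the letters that appear twice in it, and so it already pins down a specific subsequence of $S_0$.

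Next, I would enumerate, for each pattern type, the admissible configurations of $r$'s occurrences in $S_0$ (one or two, by 2-boundedness) relative to the pattern letters; for types II and III a second reduced letter $r'$ must also be tracked, corresponding respectively to the extra inequality $x>a$ or to the auxiliary witnesses $x,y$. In each configuration, the lifted subsequence — the pattern letters together with the $r$- and $r'$-occurrences — can be read off directly and matched against \cref{lemma:sporadic}. The counts line up: two sub-cases for type I yielding $abcdbacd$ and $abcdedabc$; six sub-cases for type I' giving the six listed patterns; and three sub-cases each for types II and III, summing to $2+6+3+3=14$.

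The hypothesis that $S_0$ avoids the type-A pattern $xa_0ya_1xa_0a_1$ is used exactly to discard the degenerate configurations that would otherwise lift to a type-A family pattern rather than a sporadic one. The main obstacle is the combinatorial bookkeeping in this enumeration: for each sub-case one must verify both that the lifted subsequence is the claimed sporadic pattern and that the algorithm's reduction history is consistent with producing this lift (in particular, that no earlier iteration triggered a different failure mode). Further care is needed to handle the auto-reduction effect of \textsc{Delete}, in which deleting a letter can merge adjacent duplicates and thereby turn an originally unlonely letter in $S_0$ into a lonely letter in $T$; the lift should include all original occurrences in $S_0$, which accounts for longer sporadic patterns such as $abcdedabc$.
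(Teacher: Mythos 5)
Your overall shape matches the paper's proof — lift the pattern from \cref{lemma:intermediate} back to the input by adjoining the occurrences of a previously reduced colour that witnesses the inequality, enumerate its possible positions, and use 2-boundedness together with the excluded pattern $xa_0ya_1xa_0a_1$ to discard degenerate configurations — but your choice of witness letter is where the argument breaks, and this is a genuine gap. The paper adjoins the letter $m$ that was reduced at the iteration at which $a$ \emph{first became} the letter with the earliest second appearance; the entire role of \cref{lemma:no-contradiction}, which your plan never invokes, is to guarantee that $a>p$ already holds at that moment, which pins the last occurrence of $m$ strictly between the first $p$ and the second $a$, and the fact that this particular reduction \emph{changed} the earliest-second-appearance letter is what forces, when $m$ is lonely, that $m$ be flanked by two occurrences of a single letter $n$ (via the cascade merge in \textsc{Delete}). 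That flanking letter $n$ — not ``all original occurrences of the reduced colour'' — is what produces the nine-letter sporadic orderings such as $abcdedabc$.

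Your witness, the most recently reduced colour, has neither property. For types II and III the false output is the check immediately after the $z$-reduction in \cref{procedure:Case2Exception}, so the most recently reduced colour is $z=p$ itself, which already sits inside the pattern and contributes nothing; moreover the second letter you propose to track there is misdirected, since $x>a$ (type II) and the letters $x,y$ (type III) are witnessed inside the pattern — what needs an external witness is again $a>p$ \emph{before} the $z$-reduction, i.e.\ the same $m$ as in types I and I$'$. For types I and I$'$, the most recent reduction can be a later B-step than the one that created $a>p$, for instance a lonely letter removed between $q$ and the second $a$ with no flanking repetition; your lift would then be a string such as $apqmapq$, which relabels to $abcdabc$, is foot-sortable (take the order $a<d<c<b$ and apply \cref{theorem:criterion}), and does not appear in \cref{lemma:sporadic}. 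So the enumeration as you set it up does not close, and the claimed counts $2+6+3+3=14$ are asserted rather than derived; repairing the plan essentially requires importing the paper's choice of $m$ and the argument of \cref{lemma:no-contradiction}.
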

\begin{proof}
    No matter which type of the pattern we find in \cref{lemma:intermediate}, by \cref{lemma:no-contradiction} and the discussion above, we know that we must have $a>p$ once $a$ becomes the letter with the earliest second appearance.
    Suppose that $a$ became the letter with the earliest second appearance after doing an $m$-reduction.
    Then the last $m$ must be before the second $a$ but after the first $p$.

    We first take type-I patterns as an example.
    If we find a type-I pattern at some point, then the last $m$ can be inserted into the pattern $apqapq\, (a>p)$ to get $apmqapq$ or $apqmapq$.
    Say it is $apmqapq$ for example.
    If $m$ is unlonely, then the first $m$ can be either be before $a$ (and we get $mapmqapq$), between $a$ and $p$ (and we get $ampmqapq$), or between $p$ and the first $m$.
    However, for the last case, there must be another letter $n$ between the two $m$'s, and we get $apmnmqapq$.
    Now if $m$ is lonely, then it must be the case that the letters before and after $m$ are the same, and we get $apnmnqapq$, which is $apmnmqapq$ by relabeling.  
    Using a similar argument, we see that if we encounter type-I pattern at some point, then the input must contain one of the following as a subsequence: $m\textcolor{red}{apmqapq}$, $\textcolor{red}{a}m\textcolor{red}{pmqapq}$, $\textcolor{red}{apm}nm\textcolor{red}{qapq}$, $\textcolor{red}{mapqma}p\textcolor{red}{q}$, $ampqmapq$, $\textcolor{red}{apmq}m\textcolor{red}{apq}$, $apqmnmapq$.
    However, the red subsequences are the same patterns as $xa_0ya_1xa_0a_1$.
    Therefore there are only two possibilities, and by relabeling we get the two critical sock orderings $abcdbacd$ and $abcdedabc$.

    The discussion for type I', type II and type III is also similar.
    For type I', using the same argument, the input must contain one of $mapmqaqp$, $ampmqaqp$, $apmnmqaqp$, $\textcolor{red}{mapqmaq}p$, $ampqmaqp$, $apmqmaqp$, $apqmnmaqp$.
    Discarding the one possibility that also contains $xa_0ya_1xa_0a_1$, we get six possibilities, and upon relabeling they are exactly the six type-I' critical sock orderings in \cref{lemma:sporadic}.
    Similarly, for type II, the only possibilities are $\textcolor{red}{mxapm}a\textcolor{red}{xp}$, $xmapmaxp$, $xampmaxp$ and $xapmnmaxp$.
    Discarding the first one, the remaining three are the three listed as type-II critical sock orderings in \cref{lemma:sporadic}.
    Lastly, for type III, the possibilities are $mapmaxyxp$, $ampmaxyxp$ and $apmnmaxyxp$, and they are exactly the three type-III critical sock orderings in \cref{lemma:sporadic} after relabeling.
    
\end{proof}
\begin{remark}
    It is not immediate that the subsequences obtained this way would not be foot-sortable (though those can be verified as stated in \cref{lemma:sporadic}).
    In fact, it is not necessarily the case for type-IV patterns, which is partly the reason that there are four infinite families in that case.
    The reason why the subsequences happen to be non-foot-sortable for type-I, type-I', type-II and type-III patterns is that the algorithm always reduces by $m$ before it reduces by $a,p,x$ or $y$ even without any condition.
    This will no longer be true for type-IV patterns, and we will see how this leads to the four infinite families.
\end{remark}

We next deal with the type-IV patterns.
We will assume for convenience that the input contains no sporadic critical sock orderings as subsequences, as they are already accounted for.
We first show the following result, and the argument there will also be useful later.

\begin{lemma}\label{lemma:infinite-intermediate}
    Suppose we have a 2-bounded sock ordering that avoids all the sporadic critical sock orderings listed in \cref{lemma:sporadic} as subsequences.
    If the algorithm \cref{procedure:main} runs on this sock ordering and encounters a type-IV pattern in \cref{lemma:intermediate}, then at some point in the algorithm, there is either a pattern $maqpmap$, or a pattern $aqmpmap$ with the $q$ being the first $q$, $q>m$ but $q\not>p$, and there is no $q$ after the first $p$.
\end{lemma}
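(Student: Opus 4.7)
The plan is to adapt the tracing-back strategy of \cref{lemma:sporadic-complete}: we locate the most recent $m$-reduction responsible for making $a$ the letter with the earliest second appearance at the moment the type-IV pattern is found, and case-analyze the positions of the $m$'s relative to the subsequence $aqpap$, using the absence of sporadic critical sock orderings to discard all but the two claimed patterns.

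First I would observe that at the moment the algorithm detects the type-IV pattern, the current string has a subsequence $aqpap$ with $a > q > p$, $q$ lonely, and $p$ unlonely. Taking $p$ to be the first unlonely letter after the first $a$, \cref{lemma:no-contradiction} forces $a > p$ to have been established at or before the moment $a$ first became the letter with the earliest second appearance. Let $m$ denote the letter reduced immediately before that moment. Since reducing a lonely letter cannot change the earliest-second-appearance letter, $m$ must be unlonely and appears twice in the string just before its reduction. The reduction establishes $a > p$, so the subsequence $a, p, m$ must exist at that moment, placing some $m$ after the first $p$; moreover, the last $m$ must precede the second $a$, as otherwise $a$ would already have had the earliest second appearance. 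Hence the last $m$ lies strictly between the first $p$ and the second $a$.

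Next I would enumerate the possible positions of the first $m$: before the first $a$, between the first $a$ and $q$, between $q$ and the first $p$, or between the first $p$ and the last $m$. As subsequences of the string at that moment these give, respectively, $maqpmap$, $amqpmap$, $aqmpmap$, and $aqpmmap$. The middle-placement patterns $amqpmap$ and $aqpmmap$ can each be enlarged by tracing back through the earlier reduction responsible for the relation $q > p$ (analogous to the insertion of the auxiliary letter $n$ in the proof of \cref{lemma:sporadic-complete}), and each resulting expansion contains one of the sporadic critical sock orderings listed in \cref{lemma:sporadic} as a subsequence; the hypothesis that all sporadic patterns are avoided thus rules these two cases out, leaving precisely $maqpmap$ and $aqmpmap$.

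For the second surviving pattern, the extra conditions $q > m$, $q \not> p$, $q$ being the first occurrence, and no $q$ after the first $p$ are obtained by choosing the moment of observation to be not the instant immediately before the $m$-reduction, but the latest moment at which $q > p$ has not yet been established. At such a moment $q > m$ is already in force (since $m$ arose from an even earlier reduction), while $q > p$ is imposed only by some subsequent reduction; the loneliness of $q$ makes the two positional conditions automatic. The main obstacle is the case analysis: verifying that the two discarded placements of the first $m$ genuinely force a sporadic critical sock ordering in the input. This requires lifting the relevant subsequences through prior reductions, including the one that established $q > p$, and identifying an appropriate sporadic pattern; the book-keeping of the history, together with pinpointing the correct moment at which to read off $aqmpmap$ so that $q > m$ and $q \not> p$ simultaneously hold, is the most delicate part.
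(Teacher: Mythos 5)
Your anchor point is different from the paper's, and the difference is where the argument breaks. You trace the $m$-reduction that made $a$ the letter with the earliest second appearance (mirroring \cref{lemma:sporadic-complete}) and then enumerate positions of the first $m$ against $aqpap$. The paper instead anchors at the iteration of the while loop where the distinguished minimal element is \emph{first} set at or after the first $p$ -- i.e.\ the step that is about to create $q>p$ -- and case-analyzes over which of Cases 1/2/3 that iteration is in and which letter ($a'$, $b'$, or $z'$) gets reduced. In that analysis the letter $m$ of the conclusion is usually \emph{not} the reduced letter at all: it is either the reduced letter $a'$ or $z'$, or an unlonely letter that $q$ already dominates, whose existence is forced because $q$ is lonely and non-minimal (it appears before the earliest minimal lonely letter $b'$). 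This is exactly what delivers the side conditions: $q>m$ comes from that domination, $q\not>p$ comes from the anchor being the iteration \emph{before} the distinguished minimal element passes the first $p$, and the positional conditions on $q$ are checked separately in the case where $q$ is still unlonely at that moment (e.g.\ the exceptional Case 2 with $a'=q$, which yields $aqz'qpz'ap$ with $q>z'$).

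Concretely, two steps of your proposal have no support. First, you discard the placements $amqpmap$ and $aqp\ldots map$ by asserting that "tracing back through the earlier reduction responsible for $q>p$" produces a sporadic pattern from \cref{lemma:sporadic}; no mechanism is given, and it is not even clear these configurations are impossible in a sporadic-free input -- the lemma only claims that \emph{some} pattern $maqpmap$ or $aqmpmap$ (possibly with a different letter in the role of $m$) occurs at some point, so your plan may be trying to prove a stronger and possibly false statement. Second, the side conditions are not derived: "$q>m$ is already in force since $m$ arose from an even earlier reduction" does not parse in the paper's framework -- $q>m$ in the simple partial order requires $q$ to precede some occurrence of $m$ inside the prefix determined by the current distinguished minimal position, which nothing in your setup guarantees for your $m$; and shifting the observation to "the latest moment at which $q>p$ has not yet been established" detaches the conditions from the moment at which you performed the positional analysis (your $m$-reduction need not be the iteration that first pushes the distinguished minimal element past the first $p$). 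Finally, $q$ need not be lonely at your observation moment, so the conditions "$q$ is the first $q$" and "no $q$ after the first $p$" are not automatic; the paper handles the unlonely-$q$ possibility explicitly. As it stands, the heart of the lemma -- why an $m$ with $q>m$ but $q\not>p$ must exist, and why only the two listed patterns survive -- is missing from your argument.
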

\begin{proof}
    Consider the iteration of the while loop where the distinguished minimal element is first set to be something at or after the first $p$.
    Suppose that $a'$ is the letter with the earliest second appearance at that time, and $b'$ is the earliest minimal lonely letter if it exists.
    By the assumption, we know that whichever the letter we decide to reduce with, its last appearance must be after $p$.
    Moreover, $q$ must be lonely after the reduction of this iteration.

    Suppose that $q$ is already lonely before the reduction, and $q$ is greater than some unlonely letter $m$.
    As $q$ is not greater than $p$ in this iteration yet, we know that the first $m$ must be between $q$ and $p$.
    The second $m$ must be at or after the second $a'$.
    It should also be in front of the second $a$, as otherwisewe have found a type-I or type-I' pattern, which shows that some sporadic critical ordering exists.
    This will be useful later on.
    
    Suppose first that in this particular iteration, we are in Case 1.
    In this case, $a'\neq q$, and so $q$ must already be lonely.
    Since $q$ appears before $b'$, it must be the case that $q$ is greater than some unlonely letter $m$.
    Note that since we reduce with $b'$ in this step, $b'$ is after $p$.
    This shows that the second $m$ must be after $p$.
    Thus the only possibility is $aqmpmap$, with an extra condition $q>m$, as desired.

    If we are in Case 2, suppose first that we reduce with $a'$.
    Then $a'\neq q$, and $q$ must be lonely, showing that $q$ is already greater than some unlonely letter $m$.
    Note that the second $a'$ must be after $p$, and so as before, we would end up with $aqmpmap$ with $q>m$.
    Now if we are in the exceptional case, suppose that we reduce with $z'$.
    If $a'=q$, then we find $aqz'qpz'ap$ with $q>z'$, which contains $aqz'pz'ap$ with $q>z'$.
    Otherwise, we have $a'\textcolor{red}{z'}a'\textcolor{red}{aqpz'ap}$, $\textcolor{red}{aa'z'}a'q\textcolor{red}{pz'ap}$, or $\textcolor{red}{aq}a'\textcolor{red}{z'}a'\textcolor{red}{pz'ap}$, all with $a'>z'$.
    The first highlighted part is the same as $maqpmap$, and the second and third ones are $aqmpmap$ with $q>m$.

    Lastly, suppose that we are in Case 3.
    As the reduction taken at this step makes $q>p$, it must be the case that we take an A-step here and $a'$ is after $p$.
    If $q$ is before the first $a'$, then $q$ must be larger than some unlonely letter $m$, and the second $m$ must be after $a'$, which is after $p$.
    Therefore, as before, we get $aqmpmap$ with $q>m$.
    Now if $q$ is after the first $a'$, then we get $a'aqpa'ap$ or $aa'qpa'ap$.
    The first is $maqpmap$ after relabeling.
    For the second one, as we take an A-step instead of a B-step, there must be another unlonely letter $r$ between the first $a'$ and $q$ (as $q$ has to be minimal).
    If the second $r$ comes after the second $a$, then after taking the $a'$-reduction, we get a type-I or type-I' pattern (after discarding $q$), showing that the input already consists of some sporadic critical sock orderings.
    Therefore we must have $aa'rqpa'rap$.
    However, after taking an $a'$-reduction, we get $arprap$ with $a>r>p$, which is the type-II pattern.
    Thus the input also contains a sporadic critical sock ordering in this case.
    This concludes the case analysis.
    
\end{proof}

We can finally complete the classification of the infinite families of critical sock orderings with all the results we have shown.
The argument would look really similar to the one for \cref{lemma:infinite-intermediate}.
The difference is that we would have more options in several cases that lead to four different types of critical sock orderings.

\begin{lemma}\label{lemma:infinite-complete}
    Suppose we have a 2-bounded sock ordering that avoids all the sporadic critical sock orderings listed in \cref{lemma:sporadic} as subsequences.
    If the algorithm \cref{procedure:main} runs on this sock ordering and encounters a type-IV pattern in \cref{lemma:intermediate}, then the input contains one of the critical sock orderings listed in \cref{lemma:infinite} as subsequence.
\end{lemma}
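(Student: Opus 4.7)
The plan is to use \cref{lemma:infinite-intermediate} to locate an anchor pattern inside some intermediate state of the algorithm, and then to run the algorithm backwards one reduction at a time to recover the original input. By \cref{lemma:infinite-intermediate}, at some iteration $k^*$ the current state contains either $maqpmap$ or $aqmpmap$ (with $q>m$, $q\not>p$, and no $q$ after the first $p$) as a subsequence; these conditions encode everything the simple partial order at iteration $k^*$ records about the prior reductions, and the order relations can only have been generated by previous reductions.

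I would argue by reverse induction on the number of reductions preceding the anchor iteration. The key observation is that the condition $q>m$ in the $aqmpmap$ anchor forces the reduction at iteration $k^*-1$ to have removed a letter $a'$ whose last occurrence in the iteration-$(k^*-1)$ state lay between $q$'s first occurrence and $m$'s first occurrence, since this is precisely what extends the prefix $S'$ so as to place $q$ before $m$ inside it. By examining which case of the algorithm fired at iteration $k^*-1$ (almost always Case~2 exceptional, where $a'$ plays the role of $z$), I can insert $a'$ and possibly one companion letter back into the anchor, producing at iteration $k^*-1$ a pattern of the same $aqmpmap$ shape but with one extra $(a_i,a_{i+1})$ pair attached at the spot between the two $m$'s and the two $p$'s. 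Iterating this backward trace, the accumulated pairs assemble the common tail $a_{n-2}a_{n-1}a_{n-3}a_{n-2}\cdots a_0a_1$ shared by all four infinite families.

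When the backward trace reaches iteration 1, the partial order is trivial, so the anchor at iteration 1 must be $maqpmap$ (since $aqmpmap$ with $q>m$ cannot occur without a nontrivial prefix). The shape of the head of the original input is then pinned down by which case the algorithm triggers at iteration 1: Case~2 non-exceptional reducing some $m=x$ with no lonely letters between the two $m$'s gives Type~A with head $xa_0ya_{n-1}x$; Case~2 non-exceptional with two lonely letters between the two $m$'s gives Type~B or Type~B' depending on their order; and Case~3 with a lonely $b$ between the two $m$'s, combined with the additional unlonely letter $z$ forced by the \textsc{Case3Exception} analysis, produces Type~C with head $a_0xa_{n-1}yzyx$. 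The hard part will be executing this case analysis precisely, particularly verifying that the propagated conditions $q\not>p$ and ``no $q$ after the first $p$'' constrain the inserted letters to form exactly the $(a_i,a_{i+1})$ pattern and nothing extraneous, and confirming that any appearance of \textsc{Case3Exception} in the middle of the backward trace does not introduce a configuration outside the four declared types.
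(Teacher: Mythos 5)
Your plan has a structural flaw that goes beyond "hard case analysis to execute": the backward trace is anchored on premises that are false for a general input. The relation $q>m$ at the anchor iteration need not have been created by the reduction at iteration $k^*-1$; the simple partial order records an entire prefix of the current string, and the relevant dominations may have been installed many iterations earlier, while most intervening reductions remove letters that have nothing to do with the anchor. So it is not true that each backward step reinstates one $(a_i,a_{i+1})$ pair, nor that the step is "almost always Case 2 exceptional," and the induction as described has no way to skip or absorb the irrelevant reductions. The endgame is likewise off: the algorithm's first iteration may reduce letters foreign to the pattern, so nothing forces the anchor (or the head letters $x,y,z$ of the four families) to be visible at iteration 1, and the head is not determined by "which case fires at iteration 1." In the paper the head letters come from the one iteration at which the relation $y>a_{n-1}$ is created, which can be any iteration of the run.

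The paper's actual mechanism, which your sketch lacks a substitute for, is an extremal choice rather than a backward reconstruction: among all moments of the run and all $n\ge 3$, pick a witness subsequence $a_0ya_{n-1}a_{n-2}a_{n-1}\cdots a_0a_1$ of the \emph{current} string with $y>a_{n-1}$ (plus the side conditions), chosen so that $a_{n-1}$ is earliest and, among ties, $a_0$ is latest; the whole tail is then already present in one state (hence in the input), and only the head must be located. One then analyzes the single iteration where the distinguished minimal element first passes the first $a_{n-1}$, and in each of Cases 1, 2, 2-exceptional, 3 (A- or B-step) either reads off the head (giving types A, B, B$'$, C), or contradicts the extremal choice (a witness with an earlier $a_{n-1}$ or later $a_0$ would exist), or exhibits a type-I/I$'$/II pattern, contradicting the hypothesis that sporadic critical orderings are avoided. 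These two devices — the extremal witness and the repeated use of the excluded sporadic patterns — are precisely what rule out the "extraneous" configurations you flag as the hard part (e.g.\ a lonely $y$ dominating an unlonely $m$, or the reduced letter's second occurrence landing after $a_0$ or among the later $a_k$'s), and without them, or some equivalent well-founded measure, your trace does not close.
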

\begin{proof}
    Run the algorithm, and find the moment where we can find a subsequence $a_0ya_{n-1}a_{n-2}a_{n-1}\cdots a_0a_1$ for some $n\geq 3$ with the earliest $a_{n-1}$ with the following conditions:
    \begin{itemize}
        \item All $y$'s appear before the first $a_{n-2}$.
        \item There is no $y$ before the $y$ in the subsequence.
        \item $y>a_{n-1}$.
    \end{itemize}
    If there are ties, we break ties with choosing the latest $a_0$.
    If there are still ties, choose an arbitrary one.
    By \cref{lemma:intermediate}, we may assume that $n\geq 3$ as $maqpmap$ is part of the list in \cref{lemma:infinite}, and $aqmpmap$ with $q>m$ is $a_0ya_2a_1a_2a_0a_1$ with $y>a_2$ after relabeling.
    We will follow the proof strategy of \cref{lemma:intermediate} from here.

    Consider the iteration of the while loop where the distinguished minimal element is first set after the first $a_{n-1}$.
    We still call the letter with the earliest second appearance $a'$, and the earliest minimal lonely letter $b'$ if it exists.

    We first deal with the case where $y$ is not lonely and we reduce with $a'$ or a lonely letter $b'$.
    In this case, the second $a'$ must be before the second $y$, which is before the first $a_{n-2}$.
    Moreover, the distinguished minimal element would always be before the second $a'$ after the reduction, showing that the second $a'$ must be after $a_{n-1}$.
    Therefore we must have $a_0a'ya_{n-1}a'ya_{n-2}a_{n-1}\cdots a_0a_1$, $a_0ya'a_{n-1}a'ya_{n-2}a_{n-1}\cdots a_0a_1$ or $a_0ya_{n-1}a'xa'ya_{n-2}a_{n-1}\cdots a_0a_1$.
    They are type-B, type-B' and type-C, respectively.

    If $y$ is already lonely, and $y$ is greater than some unlonely letter $m$, then as before, we know that the first $m$ must be between $y$ and $a_{n-1}$.
    We also know that the second $m$ must be after the second $a'$.
    Now if the second $m$ is after $a_0$, then as in \cref{lemma:infinite-intermediate} we have found a type-I or type-I' pattern, which guarantees the existence of some sporadic critical sock ordering by \cref{lemma:sporadic-complete}.
    If the second $m$ is before $a_0$ but after the first $a_{n-1}$, let $1\leq k\leq n-1$ be the least such that $m$ is after the first $a_k$.
    Then we get the subsequence $a_0yma_kma_{k-1}a_k\cdots a_0a_1$ with $y>m$, and we reach a contradiction as $m$ appears earlier than $a_{n-1}$.
    Therefore the second $m$ must be between $y$ and the first $a_{n-1}$ too. 
    This will again be useful later on.
    
    Suppose in this iteration, we are in Case 1.
    Then the second $a'$ must be after the first $a_{n-1}$.
    If $y$ is not lonely, then we get that the second $y$ is after the second $a'$ but before the first $a_{n-2}$.
    This gives $a_0ya_{n-1}a'xa'ya_{n-2}a_{n-1}\cdots a_0a_1$, which is of type $C$ by relabeling.
    Now if $y$ is lonely, then $y$ must be greater than some unlonely letter $m$, and by the discussion above, we see that the second $m$ is before the first $a_{n-1}$, which shows that the second $a'$ is also before the first $a_{n-1}$.
    This is a contradiction.

    Now suppose we are in Case 2.
    Suppose first that we reduce with $a'$.
    The case where $y$ is unlonely is already dealt with, so we can assume that $y$ is lonely.
    Then as the second $a'$ is after $y$, $y$ must be larger than some unlonely letter $m$.
    We will then reach a contradiction in the same way as in the paragraph above.
    The remaining case in Case 2 is the exceptional case where we reduce by $z'$.
    
    In the exceptional case, if $a'=y$, then we have $y>z'$ as we decide to reduce by $z'$.
    Though $y$ is not lonely in this case, using the same argument we can still show that the second $z'$ is after the first $a_{n-1}$ and before the first $a_{n-2}$, giving $a_0yz'ya_{n-1}za_{n-2}a_{n-1}\cdots a_0a_1$ with $y>z'$.
    If we omit the second $y$ and relabel $z'$ as $a_n$, we get $a_0ya_na_{n-1}a_n\cdots a_0a_1$ with $y>a_n$ and the second $y$ before $a_{n-1}$.
    This contradicts the choice of the subsequence we start with.
    
    If $a'\neq y$ instead, then we have $a'>z'$, and $a'z'a'$ either appear before $a_0$, between $a_0$ and $y$, or between $y$ and $a_{n-1}$.
    Moreover, as we cannot get an immediate contradiction upon reducing by $z'$, the last $z'$ has to be before the second $a_{n-1}$.
    Therefore there are six possibilities: 
    $a'\textcolor{red}{z'}a'\textcolor{red}{a_0ya_{n-1}z'a_{n-2}a_{n-1}\cdots a_0a_1}$, 
    $a'\textcolor{red}{z'}a'\textcolor{red}{a_0y}a_{n-1}\textcolor{red}{a_{n-2}z'}a_{n-1}\textcolor{red}{\cdots a_0a_1}$,
    $\textcolor{red}{a_0a'z'}a'y\textcolor{red}{a_{n-1}z'a_{n-2}a_{n-1}\cdots a_0a_1}$, 
    $\textcolor{red}{a_0a'z'}a'ya_{n-1}\textcolor{red}{a_{n-2}z'}a_{n-1}\textcolor{red}{\cdots a_0a_1}$, 
    $\textcolor{red}{a_0y}a'\textcolor{red}{z'}a'\textcolor{red}{a_{n-1}z'a_{n-2}a_{n-1}\cdots a_0a_1}$, 
    $\textcolor{red}{a_0y}a'\textcolor{red}{z'}a'a_{n-1}\textcolor{red}{a_{n-2}z'}a_{n-1}\textcolor{red}{\cdots a_0a_1}$.
    The highlighted parts of the first and second possibilities are already type-A critical sock orderings, and for the rest of the possibilities, the highlighted part is another sequence that we could have started with but with an earlier $a_{n-1}$, which is a contradiction.
    This concludes the discussion for Case 2.

    For Case 3, if we choose to do a B-step and reduce with a lonely letter $b'$, then $b'$ must be after $a_{n-1}$, and in particular, after $y$.
    The case where $y$ is unlonely is done, so we assume that $y$ is lonely.
    This means that $y>a'$, and we have shown that in this case, we must have that the second $a'$ must be between $y$ and the first $a_{n-1}$.
    This contradicts the fact that $b'$ is after $a_{n-1}$, as $b'$ must be before $a'$.
    Now if we choose to do an A-step, we may once again assume that $y$ is lonely.
    If $y$ is before the first $a'$, then it must be the case that $y$ is larger than some unlonely letter, and we can reach a contradiction in the same way as before.
    Therefore the first $a'$ is before $y$, and the second $a'$ is between the two $a_{n-1}$'s.
    If $a'$ is before $a_0$, then we get two possibilities: $a'a_0ya_{n-1}a'a_{n-2}a_{n-1}\cdots a_0a_1$ and  $a'a_0ya_{n-1}a_{n-2}a'a_{n-1}\cdots a_0a_1$.
    The first itself is type-A, and the second is also type-A once we remove the $a_{n-1}$'s.
    Lastly, if $a'$ is between $a_0$ and $y$, as we decide to take an A-step, there must be some unlonely letter $r$ between the first $a'$ and $y$.
    If the second $r$ is before $a_{n-1}$, then after doing an $a'$-reduction, we get $a_0>r>y$ and a subsequence $a_0ra_{n-1}ra_{n-1}a_0$, which is a type-II pattern and thus a contradiction.
    If the second $r$ is after the second $a_0$, then after the $a'$-reduction, we get $a_0>r$ and a subsequence $a_0ra_1a_0ra_1$ or $a_0ra_1a_0a_1r$, which is a type-I or type-I' pattern.
    Those are also contradictions.
    Let $k$ be the largest non-negative integer such that $r$ is before the second $a_k$.
    Then $0\leq k\leq n-2$, and after the $a'$-reduction, we get a subsequence of $rya_{n-1}a_{n-2}a_{n-1}\cdots a_ka_{k+1}ra_k$ with $y>a_{n-1}$.
    Note that this subsequence satisfies all the conditions that the original subsequence $a_0ya_{n-1}a_{n-2}a_{n-1}\cdots a_0a_1$ meet, they have the same $a_{n-1}$, and $r$ is later than $a_0$.
    Therefore this is a contradiction with the tie-breaking rule.
    This concludes the proof.
    
\end{proof}

We can now put everything together to prove \cref{theorem:all-minimal}.
\begin{proof}[Proof of \cref{theorem:all-minimal}]
    To show the theorem, we will show that any non-foot-sortable 2-bounded  sock ordering contains a pattern listed in \cref{lemma:sporadic} or \cref{lemma:infinite}.
    Otherwise, take a counterexample that does not contain any of the patterns.
    By \cref{lemma:intermediate} and the contrapositive of \cref{lemma:sporadic-complete}, when the algorithm \cref{procedure:main} runs on the counterexample, we must get a type-IV pattern at some point.
    \cref{lemma:infinite-complete} then leads to a contradiction.
\end{proof}
\section*{Acknowledgement}
I would like to thank Noah Kravitz for introducing this problem to me, and Colin Defant and Noah Kravitz for some discussions that are particularly helpful for Section 4.
\bibliographystyle{amsplain0}
\bibliography{ref_joints}
\end{document}